\DeclareMathOperator\Fl{\mathrm Fl}
\DeclareMathOperator\PP{\mathcal P}
\DeclareMathOperator\E{\mathcal E}
\DeclareMathOperator\A{\mathbb A}
\DeclareMathOperator\B{\mathbb B}
\DeclareMathOperator\N{\mathbb N}
\DeclareMathOperator\HH{\mathcal H}
\DeclareMathOperator\R{\mathbb R}
\DeclareMathOperator\C{\mathbb C}
\DeclareMathOperator\Z{\mathbb Z}
\DeclareMathOperator\Q{\mathbb Q}
\DeclareMathOperator{\Hom}{Hom}
\DeclareMathOperator{\Ext}{Ext}
\DeclareMathOperator{\codim}{codim}
\DeclareMathOperator{\rk}{rk}
\DeclareMathOperator\CQ{{\mathbb C}Q}
\DeclareMathOperator\EE{\mathbb E}
\DeclareMathOperator\F{\mathcal F}
\DeclareMathOperator\G{\mathcal G}
\newcommand{\ontop}[2]{\genfrac{}{}{0pt}{}{#1}{#2}}
\newcommand{\mapsfrom}{\leftarrow}
\DeclareMathOperator\om{\omega}
\DeclareMathOperator{\DDelta}{\boldsymbol{\Delta}}
\newtheorem{fact}{Fact}[section]
\newtheorem{lemma}[fact]{Lemma}
\newtheorem{theorem}[fact]{Theorem}
\newtheorem{definition}[fact]{Definition}
\newtheorem{example}[fact]{Example}
\newtheorem{rremark}[fact]{Remark}
\newenvironment{remark}{\begin{rremark} \rm}{\end{rremark}}
\newtheorem{proposition}[fact]{Proposition}
\title{On the Cohomological Hall Algebra of Dynkin quivers}
\author{R. Rim\'anyi}
\address{Department of Mathematics, University of North Carolina at Chapel Hill, USA and University of Geneva, Switzerland}
\email{rimanyi@email.unc.edu}
\begin{document}

\begin{abstract} 
Consider the Cohomological Hall Algebra as defined  by Kontsevich and Soibelman, associated with a Dynkin quiver. We reinterpret the geometry behind the multiplication map in the COHA, and give an iterated residue formula for it. We show natural subalgebras whose product is the whole COHA (except in the $E_8$ case). The dimension count version of this statement is an identity for quantum dilogarithm series, first proved by Reineke. We also show that natural structure constants of the COHA are universal polynomials representing degeneracy loci, a.k.a. quiver polynomials.
\end{abstract}

\maketitle

\section{Introduction}

Let $Q=(Q_0,Q_1)$ be an oriented graph, the quiver. For a dimension vector $\gamma\in \N^{Q_0}$ one defines a representation of a Lie group $G_\gamma$  on a vector space $V_\gamma$.
The main object of study of this paper is the formal sum of equivariant cohomology groups
\begin{equation} \label{eqn:oplus}
\bigoplus_{\gamma} H^*_{G_\gamma}(V_\gamma).
\end{equation}

On the one hand, this sum contains geometrically relevant elements, the so-called quiver polynomials. Quiver polynomials are universal polynomials expressing fundamental cohomology classes of degeneracy loci---in the spirit of the celebrated Giambelli-Thom-Porteous formula (cf. also Thom polynomials). They generalize key objects of algebraic combinatorics (different double and quantum versions of Schur and Schubert polynomials)---even if the underlying unoriented graph is one of the Dynkin graphs ADE. Several algorithms and formulas are given in the literature for quiver polynomials (see references in Section \ref{sec:tps}). Yet, quite little is known about their structure; for example a Schur-positivity conjecture of Buch \cite{buch:dynkin} (see also \cite[Sect.8]{RRqr}) has been open for many years now.

On the other hand, in \cite{ks_coha} Kontsevich and Soibelman defined a remarkable associative product on (\ref{eqn:oplus}), and named the resulting algebra the Cohomological Hall Algebra (COHA) of the quiver. Their definition is inspired by string theory, their COHA is a candidate for the algebra of BPS states. In \cite{ks_coha} and in subsequent papers by others, the key point of studying COHAs is their Poincar\'e series. These series have connections with Donaldson-Thomas invariants. An initial phenomenon along these lines is that the comparison of different Poincar\'e series of a Dynkin COHA implies identities among quantum dilogarithm series. These identities have a long history, for most recent results see \cite{reineke_dilog}, \cite{ks_coha}.

\bigskip

The aim of the present paper is to give a detailed study of COHAs associated with a Dynkin quiver. Doing so, we will adopt both points of views: the search for geometrically relevant elements, and counting dimensions (i.e. Poincar\'e series).


After introductions to some background material on spectral sequences, quivers, and COHA, we will address the following five topics:

(1) In Section \ref{sec:dilog} we will prove a special case (in some sense the `extreme' case) of Reineke's quantum dilogarithm identities. We will derive this from a spectral sequence, which is associated to Kazarian in singularity theory. This treatment of the identities is similar to \cite{ks_coha}, but our point of view is somewhat different.

(2) In Section \ref{sec:geo} we give a new geometric interpretation of Kontsevich and Soibelman's product map in the COHA. This argument is based on a construction of Reineke \cite{reineke}. As a personal note let us mention the author's appreciation for the ideas and results in \cite{reineke}: in our view they foreshadow several important results on quiver polynomials and the COHA.

(3) In recent works on formulas of degeneracy loci (\cite{bsz}, \cite{kaza:gysin}, \cite{kaza:noas}, \cite{RRqr}) a new formalism turned out to be useful: replacing equivariant localization formulas with iterated residue formulas. In Section \ref{sec:res} we present the corresponding result for the COHA multiplication.

(4) In Theorem \ref{thm:tp} we will show that quiver polynomials representing degeneracy loci are natural structure constants of COHAs---one of the main results of this paper.

(5) Another main result is a structure theorem for COHAs of Dynkin type $A_n,D_n, E_6$, and $E_7$, Theorem \ref{thm:structure}. Namely, we will show that the COHA can be factored into the product of natural sub-algebras. This result is claimed in \cite{ks_coha} for $A_2$, but is new for other quivers. The key notion of the proof is a well-chosen restriction homomorphism in equivariant cohomology.

\bigskip

\noindent{\bf Acknowledgement.} This work was carried out while the author visited EPF Lausanne and the University of Geneva. I would like to thank these institutions, in particular, T. Hausel and A. Szenes for their hospitality. Special thanks to T. Hausel who introduced me to the topic of COHAs. The author was partially supported by NSF grant DMS-1200685.

\section{The Kazarian spectral sequence---generalities}\label{sec:kss}

Let the Lie group $G$ act on the manifold $V$, and let $V=\eta_1 \cup \eta_2 \cup\ldots$ be a finite stratification by $G$-invariant submanifolds. If we denote $$F_i=\bigcup_{\codim \eta_j \leq i} \eta_j$$
then we have a filtration
$$F_0 \subset F_1 \subset F_2  \subset \ldots \subset F_{\dim V}=V$$
of $V$. Applying the Borel construction ($B_GX=EG\times_G X$), we obtain a filtration
$$B_GF_0 \subset B_GF_1 \subset B_GF_2  \subset \ldots \subset B_GF_{\dim_{\R} V}=B_GV$$
of the Borel construction $B_GV$. We will call the cohomology spectral sequence associated with this latter filtration the {\sl Kazarian spectral sequence} \cite{kazass} of the action, and the stratification, although it is present explicitly or implicitly in various works, e.g. \cite{ab83}. Cohomology is meant with integer coefficients in the whole paper.

We will be concerned with the Kazarian spectral sequence in the particularly nice situation when $G$ is a complex algebraic group acting on the complex vector space $V$, and the $\eta$'s are the obits. In particular, we assume that the representation has finitely many orbits. Stabilizer subgroups of different points in the orbit $\eta$ are conjugate, hence isomorphic. Let $G_\eta$ be the isomorphism type of the stabilizer subgroup of any point in $\eta$.
The properties of the Kazarian spectral sequence we will need in this paper are collected in the following theorem.

\begin{theorem} \label{thm:kaza}
The Kazarian spectral sequence $E_*^{pq}$ converges to $H^*(BG)$, and
$$E_1^{pq}=\mathop{\bigoplus_{\eta \subset V\ \text{orbit}}}_{\codim_{\R} \eta = p} H^q(BG_\eta).$$
\end{theorem}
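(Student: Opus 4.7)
The plan has two parts: first, identify the abutment; second, identify the $E_1$ page.

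For the abutment, since $V$ is a (complex) vector space it is contractible, so the fibration $B_GV \to BG$ with fiber $V$ is a homotopy equivalence. Hence $H^*(B_GV) \cong H^*(BG)$. The Kazarian filtration $B_GF_0 \subset B_GF_1 \subset \cdots \subset B_GV$ is by closed subsets, and any filtration spectral sequence of this form converges to $H^*$ of the total space, so we get convergence to $H^*(BG)$.

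For the $E_1$ page, I would use the standard identification $E_1^{pq} = H^{p+q}(B_GF_p, B_GF_{p-1})$ (with $p$ indexing the filtration degree). The difference $F_p \setminus F_{p-1}$ is exactly the disjoint union of the orbits of real codimension $p$, and the Borel construction is additive on disjoint unions, so
\[
B_GF_p \setminus B_GF_{p-1} = \bigsqcup_{\codim_{\R}\eta = p} B_G\eta .
\]
By excision (applied orbit by orbit to tubular neighborhoods of each $B_G\eta$ inside $B_GF_p$), the relative cohomology splits as a direct sum indexed by orbits of codimension $p$, and each summand is the Thom space cohomology of the normal bundle of $B_G\eta$ in $B_GF_p$. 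Since $G$ acts complex-linearly, this normal bundle is a complex vector bundle of complex rank $p/2$, hence canonically oriented, and the Thom isomorphism gives $H^{p+q}(\mathrm{Th}(N_{B_G\eta})) \cong H^q(B_G\eta)$.

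Finally, each orbit $\eta$ is $G$-equivariantly $G/G_\eta$, so $B_G\eta = EG\times_G G/G_\eta = EG/G_\eta = BG_\eta$. Assembling, $E_1^{pq} = \bigoplus_{\codim_{\R}\eta=p} H^q(BG_\eta)$, as claimed.

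The main technical subtlety I would expect is the Thom/tubular-neighborhood step in the Borel construction: $EG$ is only a direct limit of finite-dimensional manifolds, so one must either work with finite-dimensional approximations $E_NG$ (where the orbit strata are honest smooth submanifolds and the normal bundles are genuine complex vector bundles), take the limit, and verify the spectral sequence is compatible; or invoke a version of the Thom isomorphism for equivariant cohomology directly. Neither is hard, but this is where a careless argument could slip. Everything else (orientability, excision, identification with $BG_\eta$) is automatic from the complex-algebraic setting and the finiteness of the orbit set.
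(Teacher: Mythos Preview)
Your proof is correct and follows exactly the approach the paper sketches: convergence from the equivariant contractibility of $V$, and the $E_1$ identification via the usual relative-cohomology description of $E_1$, excision, the Thom isomorphism, and $B_G\eta \simeq BG_\eta$. The paper gives no more detail than this (referring to \cite{kazass} for the rest), so your added remarks on the finite-dimensional approximation/Thom step are a welcome elaboration rather than a deviation.
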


The convergence claim follows from the fact that $V$ is (equivariantly) contractible. The $E_1$ claim follows from the usual description of the $E_1$ page as relative cohomologies, if one applies excision and the Thom isomorphism to these relative cohomologies. See details e.g. in \cite{kazass}.
\medskip

In general, when one considers such a spectral sequence, there are a few notions deserving special attention.

\begin{itemize}
\item{} A cohomology spectral sequence has a {\sl vertical edge homomorphism} from the limit of the spectral sequence to the 0'th column of the $E_{\infty}$ page. This homomorphism is very relevant when the 0 codimensional stratum is not an orbit. For example, when it is the set of (semi)stable points of the action, this vertical edge homomorphism is called the Kirwan map. A similar `restriction map' will be considered in Section \ref{sec:structure}.
\item{} The differentials of the spectral sequence on the $E_1$ page are $E_1^{pq}\to E_1^{p+1,q}$. Hence the 0'th row of the $E_1$ page is a complex, whose linear generators are in bijection with the strata. In some situations occurring in singularity theory and in knot theory this complex is called the Vassiliev complex. This complex will not play a role in the present paper.
\item{} In favorable situations the spectral sequence degenerates at $E_1$, that is, we have $E_1=E_2=\ldots=E_{\infty}$. This is automatically the case for example, when every odd column and every odd row of $E_1$ vanishes. In this case, by convergence we have $\oplus_{p+q=N} \rk E_1^{pq}=\rk A_N$ where $A_N$ is the $N$'th graded piece of the limit of the spectral sequence. These identities will be considered in Section \ref{sec:dilog}.
\item{} A cohomology spectral sequence has a {\sl horizonal edge homomorphism} from the 0'th row of the $E_{\infty}$ page to the limit of the spectral sequence. Consider the simplest situation when $V$ is contractible (eg. a vector space) and $E_1=\ldots=E_\infty$. Then the 0'th row of $E_\infty$ is a vector space with basis the set of strata. Under the horizontal edge homomorphism the basis vector corresponding to $\eta$ maps to the equivariant fundamental class $[\overline{\eta}]\in H^{\codim_{\R} \eta}(BG)$ of the closure of $\eta$. The horizonal edge homomorphism is called the {\sl Thom polynomial map} in singularity theory, where $[\overline{\eta}]$ is called the Thom polynomial of $\eta$. We will consider the $[\overline{\eta}]$ classes in Section \ref{sec:tps}.
\end{itemize}

\begin{example} \rm
To get familiar with these notions let us discuss the example of $GL_n=GL_n(\C)$ acting on $\C^n$ the usual way: multiplication. There are two orbits, $\eta_0=\{0\}$ (codim $n$) and $\eta_1=\C^n-\{0\}$ (codim 0). We have $G_{\eta_0}=GL_n$ and $G_{\eta_1}\cong GL_{n-1}$ (homotopy equivalence). Hence the $E_1$ page is 0, except we have the cohomologies of $BGL_{n-1}$ in the 0'th column and the cohomologies of $BGL_{n}$ in the $2n$'th column.
Since the odd Betti numbers of $BGL$ spaces are 0, the spectral sequence degenerates at $E_1$. Let us look at the notions itemized above.
\begin{itemize}
\item{} The vertical edge homomorphism $H^*(BGL_n)\to H^*(BGL_{n-1})$ is induced by the inclusion $GL_{n-1}\subset GL_n$, hence it is $\Z[c_1,\ldots, c_n]\to \Z[c_1,\ldots,c_{n-1}]$, $c_n\mapsto 0$.
\item{} The Vassiliev complex is trivial, having only a $\C$ term at positions 0 and $2n$.
 \item{} The identity we obtain for Betti numbers is $b_{2i}(BGL_n)+b_{2(i+n)}(BGL_{n-1})=b_{2(i+n)}(BGL_n)$. Noting that $b_{2i}(BGL_n)$ is the number of partitions of $i$ using only the parts $1,2,\ldots,n$, the identity is combinatorially obvious. A good way to encode this identity for all $i$ at the same time is using the generating sequence $f_n=\sum_i b_{2i}(BGL_n)q^i=1/\prod_{j=1}^n (1-q^j)$:
\begin{equation*}\label{triv_id} f_n+q^n f_{n-1} = q^n f_n.\end{equation*}
\item{} The horizontal edge homomorphism is given by
\begin{equation*}
\begin{array}{lll}
 H^0(BGL_{n-1})\to H^0(BGL_n),  & & 1\mapsto [\overline{\eta_1}]=1, \\
 H^0(BGL_{n})\to H^{2n}(BGL_n), & & 1\mapsto [\eta_0]=c_n.
\end{array}
\end{equation*}
\end{itemize}
\end{example}

\section{Quivers---generalities} \label{sec:quivers}

Let $Q_0$ be the set of vertices, and let $Q_1\subset Q_0\times Q_0$ be the set of edges of a finite oriented graph $Q=(Q_0,Q_1)$, the quiver. Tails and heads of an edge are denoted by $t,h$, that is, $a=(t(a),h(a))\in Q_1$. By dimension vector we mean an element of $\N^{Q_0}$. The Euler form on the set of dimension vectors is defined by
$$\chi(\gamma_1, \gamma_2)= \sum_{i\in Q_0} \gamma_1(i) \gamma_2(i) - \sum_{a\in Q_1} \gamma_1(t(a)) \gamma_2(h(a)).$$
Its opposite anti-symmetrization will be denoted by $\lambda$:
$$\lambda(\gamma_1,\gamma_2)=\chi(\gamma_2,\gamma_1)-\chi(\gamma_1,\gamma_2).$$

\subsection{Path Algebra, Modules} \label{sec:q_modules}
Consider the complex vector space spanned by the oriented paths (including the empty path $\psi_i$ at each vertex $i$) of $Q$. Define the multiplication on this space by concatenation (or 0, if the paths do not match). The resulting algebra is the path algebra $\CQ$ of the quiver $Q$. By $Q$-module we will mean a finite dimensional right $\CQ$-module. 
One has $\Ext^{\geq 2}(M,N)=0$ for any two modules, hence $\Ext^1$ will simply be called $\Ext$. A module $M$ defines a dimension vector $\gamma(i)=\dim (M \psi_i)$. Hence the Euler form can be defined on modules as well, and we have $\chi(M,N)=\dim \Hom(M,N)-\dim \Ext(M,N)$. A module is called indecomposable if it is indecomposable as a $\CQ$-module. Every module $M$ can be written uniquely as $\oplus_\beta c_\beta A_\beta$, where $A_\beta$'s are indecomposable modules.

\subsection{Quiver representations} \label{sec:q_reps}
Fixing a dimension vector $\gamma\in \N^{Q_0}$ we have the quiver representation of the group $G_\gamma=\times_{i\in Q_0} GL_{\gamma(i)}$ on the vector space  $V_\gamma=\Hom_{a\in Q_1}(\C^{\gamma(t(a))},\C^{\gamma(h(a))})$ by
$$ (g_i)_{i\in Q_0} \cdot  (\phi_a)_{a\in Q_1} = (g_{h(a)} \circ \phi_a \circ g_{t(a)}^{-1} )_{a\in Q_1}.$$
The set of orbits of $V_\gamma$ is in bijection with the isomorphism classes of $Q$-modules whose dimension vector is $\gamma$. The complex codimension of the orbit in $V_\gamma$, corresponding to the module $M$, is $\dim \Ext (M,M)$ (Voigt lemma).

\subsection{Dynkin quivers} From now on in the whole paper we assume that the underlying unoriented graph of $Q$ is one of the simply laced Dynkin graphs $A_n, D_n, E_6, E_7, E_8$. These quivers are called Dynkin (or finite) quivers. The simple roots of the same name root system will be denoted by $\alpha_i$ ($i\in Q_0$). The set of positive roots will be denoted by $R(Q)=\{\beta_1,\ldots,\beta_N\}$. Define the non-negative numbers $d_u^i$ by $\beta_u=\sum_{i\in Q_0} d_u^i \alpha_i$. Gabriel's theorem claims that the indecomposable $Q$-modules (up to isomorphism) are in bijection with the positive roots. Moreover, if $A_{\beta_u}$ denotes the indecomposable module corresponding to $\beta_u$, then the dimension vector of $A_{\beta_u}$ is $\gamma(i)=d_u^i$. In particular, for a Dynkin quiver, and any dimension vector $\gamma$, there are only finitely many orbits of the quiver representation $V_\gamma$. For an indecomposable module $A_{\beta_u}$ one has $\chi(A_{\beta_u},A_{\beta_u})=1$.

\subsection{On stabilizers of orbits}
Consider a Dynkin quiver $Q$, and a dimension vector $\gamma\in\N^{Q_0}$. Let $\eta$ be an orbit of the corresponding quiver representation of the group $G_\gamma$ on $V_\gamma$, as defined in Section \ref{sec:q_reps}. Let the corresponding $\CQ$ module be $M$ (Section \ref{sec:q_modules}). Let
$$M=\bigoplus_{\beta\in R(Q)} m_\beta A_\beta$$
be the unique expansion of $M$ into direct sums of indecomposable modules $A_\beta$ and multiplicities $m_\beta\in\N$.

\begin{proposition} \cite[Prop.3.6]{tegez} \label{prop:stab}
Up to homotopy equivalence the stabilizer subgroup $G_\eta$ is
$$G_\eta=\times_{\beta\in R(Q)} U(m_\beta).$$
\end{proposition}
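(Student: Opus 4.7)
The overall strategy is to identify $G_\eta$ with the unit group of $\Hom(M,M)$ and then strip off the unipotent part of its Levi decomposition; what remains is a product of general linear groups that retract onto their unitary subgroups.

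First I would identify the stabilizer with an algebraic object. A point $\phi=(\phi_a)_{a\in Q_1}\in V_\gamma$ represents the $\CQ$-module $M$, and $g=(g_i)\in G_\gamma$ fixes $\phi$ if and only if $g_{h(a)}\phi_a=\phi_a g_{t(a)}$ for every arrow $a$; this is precisely the condition that $g$ is a $\CQ$-module automorphism of $M$. Hence $G_\eta=E^\times$, the group of units of the finite-dimensional associative $\C$-algebra $E:=\Hom(M,M)$.

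Next I would read off the Wedderburn structure of $E$ from $M=\bigoplus_\beta m_\beta A_\beta$. For a Dynkin quiver each indecomposable $A_\beta$ is exceptional: $\Hom(A_\beta,A_\beta)=\C$, as forced by $\chi(A_\beta,A_\beta)=1$ together with $\dim\Hom\ge 1$ and $\dim\Ext\ge 0$. Since the $A_\beta$ are pairwise non-isomorphic, the standard Krull--Schmidt analysis identifies the semisimple quotient
\[
E/J(E)\;\cong\;\prod_{\beta\in R(Q)}\mathrm{Mat}_{m_\beta}(\C),
\]
and the Wedderburn--Malcev theorem (applicable over the perfect field $\C$) lifts this splitting to a subalgebra embedding $L\hookrightarrow E$ with $E=L\oplus J(E)$ as vector spaces.

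Passing to unit groups yields a Levi decomposition of linear algebraic groups
\[
G_\eta\;=\;E^\times\;=\;\bigl(1+J(E)\bigr)\rtimes L^\times,\qquad L^\times=\prod_\beta GL_{m_\beta}(\C).
\]
The unipotent factor $1+J(E)$ is homeomorphic to the affine space $J(E)$ via $\exp$, hence contractible, so $G_\eta$ deformation retracts onto $\prod_\beta GL_{m_\beta}(\C)$, and each $GL_{m_\beta}(\C)$ deformation retracts onto its maximal compact subgroup $U(m_\beta)$. The main (mild) obstacle is the Wedderburn computation $E/J(E)\cong\prod_\beta\mathrm{Mat}_{m_\beta}(\C)$: without $\Hom(A_\beta,A_\beta)=\C$ the diagonal blocks would be matrix rings over larger endomorphism algebras, and the off-diagonal blocks need not be an ideal. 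The Dynkin hypothesis enters exactly here, through the exceptionality of indecomposables; everything else is general structure theory for finite-dimensional $\C$-algebras and their unit groups.
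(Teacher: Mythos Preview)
Your argument is correct. The paper does not actually prove this proposition; it quotes it from \cite{tegez} and only adds the remark that the result ``essentially depends on the fact that the so-called Auslander--Reiten quiver of $Q$ has no oriented cycles.'' That hint points to a slightly different, more hands-on proof than yours: if the AR quiver is acyclic one may order the indecomposables $A_{\beta_1},\ldots,A_{\beta_N}$ so that $\Hom(A_{\beta_v},A_{\beta_u})=0$ for $v>u$; then $\End(M)$ is block upper-triangular with diagonal blocks $\mathrm{Mat}_{m_u}(\End(A_{\beta_u}))=\mathrm{Mat}_{m_u}(\C)$, the strictly upper-triangular part is visibly nilpotent, and the Levi decomposition is explicit without invoking Wedderburn--Malcev. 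Your route through $E/J(E)$ and the Wedderburn--Malcev splitting is cleaner and more general: it needs only that each $A_\beta$ is a brick (which you correctly extract from $\chi(A_\beta,A_\beta)=1$), not the stronger directedness of the AR quiver.

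One small wording issue in your last paragraph: the off-diagonal blocks $\Hom(A_\beta,A_{\beta'})$ for $\beta\neq\beta'$ always lie in $J(E)$, for any finite-dimensional algebra over $\C$ with Krull--Schmidt; this does not require the Dynkin hypothesis. The place where Dynkin genuinely enters is exactly the one you name first, namely $\End(A_\beta)=\C$, which forces the diagonal blocks of the Levi to be $GL_{m_\beta}(\C)$ rather than unit groups of larger local algebras.
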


Note that this proposition essentially depends on the fact that the so-called Auslander-Reiten quiver of $Q$ has no oriented cycles, which holds for Dynkin quivers.

\subsection{The quantum algebra of the quiver} Let $q^{1/2}$ be a variable, its square will be denoted by $q$.
The quantum algebra $\A_Q$ of the quiver $Q$ is the $\Q(q^{1/2})$-algebra generated by symbols $y_{\gamma}$ for all $\gamma\in \N^{Q_0}$, and subject to the relations
\begin{equation}\label{eq:AQ_def}
y_{\gamma_1} y_{\gamma_2} = - q^{\frac12\lambda(\gamma_1,\gamma_2)} y_{\gamma_1+\gamma_2}.
\end{equation}
The symbols $y_{\gamma}$ for $\gamma\in \N^{Q_0}$ form a basis of $\A_Q$. As an algebra, the special elements $y_{e_i}$ ($e_i(j)=\delta_{ij}$) generate $\A_Q$.

We will use the shorthand notation $y_M=y_{\gamma(M)}$ (where $\gamma(M)$ is the dimension vector of the module $M$) for a module $M$, as well as $y_\beta=y_{A_\beta}$ for a positive root $\beta$ and the corresponding indecomposable module $A_\beta$.

\section{Convention on the ordering of simple and positive roots} \label{sec:order}

Let $Q$ be a Dynkin quiver with $n$ vertices. The corresponding simple roots are associated to the vertices. We will fix an ordered list of the simple roots
$$\alpha_1,\alpha_2,\ldots,\alpha_n$$
in such a way that the head of any edge comes before its tail. That is, from now on in the whole paper, the vertices of the quiver will be numbered from $1$ to $n$ in such a way that for every edge head comes before tail.

We will also fix an ordering of the positive roots
$$\beta_1,\beta_2,\ldots,\beta_N$$
in such a way that
$$u<v \qquad \Rightarrow \qquad \Hom(A_{\beta_u},A_{\beta_v})=0, \Ext(A_{\beta_v}, A_{\beta_u})=0,$$
where $A_\beta$ is the indecomposable module corresponding to the positive root $\beta$. Such an ordering exits, but is not unique \cite{reineke_feigin, reineke}.

\smallskip

For example, the quiver $\bullet \to \bullet$ will be $2 \to 1$, and the order described above has to be: $\beta_1=\alpha_2$, $\beta_2=\alpha_1+\alpha_2$, $\beta_3=\alpha_1$.

\section{Reading orbit dimensions from the quantum algebra} \label{sec:codim_in_AQ}

Let $\gamma\in \N^{Q_0}$ be a dimension vector. Consider the associated quiver representation of $G_\gamma$ on $V_\gamma$. Let $\eta$ be an orbit, with corresponding module $\oplus_{u=1}^N m_u A_{\beta_u}$. This implies $\gamma(i)=\sum_{u=1}^N m_ud^i_u$. Let the simple and positive roots be ordered according to Section \ref{sec:order}. Define $w$ by the identity

$$y_{\beta_1}^{m_1} y_{\beta_2}^{m_2} \ldots y_{\beta_N}^{m_N} = (-1)^{\sum_u m_u (\sum_i d_u^i-1)} \cdot  q^w \cdot y_{\alpha_1}^{\gamma(1)}  y_{\alpha_2}^{\gamma(2)} \ldots y_{\alpha_n}^{\gamma(n)}\qquad \in \qquad \A_Q.$$

\begin{lemma} \label{lem:codim_in_AQ}
We have
$$\frac{\sum_{u=1}^N m_u^2}{2} - \frac{\sum_{i=1}^n \gamma(i)^2}{2} + w - \codim_{\C} \eta =0.$$
\end{lemma}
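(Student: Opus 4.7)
The plan is to derive an explicit formula for $w$ directly from the defining relation (\ref{eq:AQ_def}), and then match this expression against $\codim_\C \eta$ by invoking Voigt's lemma (Section \ref{sec:q_reps}) and Proposition \ref{prop:stab}.

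First, an easy induction on the number of factors, using bilinearity of $\lambda$, shows that in $\A_Q$
\[ y_{\delta_1}\cdots y_{\delta_k} = (-1)^{k-1}\,q^{\frac12\sum_{i<j}\lambda(\delta_i,\delta_j)}\,y_{\delta_1+\cdots+\delta_k}. \]
Apply this to both sides of the equation defining $w$. On the left, $\lambda(\beta,\beta)=0$ erases within-block pairs, leaving the exponent $\tfrac12\sum_{u<v}m_um_v\lambda(\beta_u,\beta_v)$; on the right one obtains $\tfrac12\sum_{i<j}\gamma(i)\gamma(j)\lambda(\alpha_i,\alpha_j)$. A quick mod-$2$ parity check using $\sum_u m_u\sum_i d_u^i=\sum_i\gamma(i)$ shows the sign prefactor $(-1)^{\sum_u m_u(\sum_i d_u^i-1)}$ converts $(-1)^{\sum_i\gamma(i)-1}$ into $(-1)^{\sum_u m_u-1}$, and so
\[ w = \tfrac12\Big(\sum_{u<v} m_um_v\,\lambda(\beta_u,\beta_v) \;-\; \sum_{i<j}\gamma(i)\gamma(j)\,\lambda(\alpha_i,\alpha_j)\Big). \]

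Next, I would evaluate the two sums using the ordering conventions of Section \ref{sec:order}. The head-before-tail rule forces $\chi(\alpha_i,\alpha_j)=0$ when $i<j$ (no edge from $i$ to $j$), while $\chi(\alpha_j,\alpha_i)=-|\{a\in Q_1:t(a)=j,h(a)=i\}|$. Hence the simple-root sum equals $-\sum_{a\in Q_1}\gamma(t(a))\gamma(h(a))=-\dim V_\gamma$. For the positive-root sum, the vanishings $\Hom(A_{\beta_u},A_{\beta_v})=\Ext(A_{\beta_v},A_{\beta_u})=0$ for $u<v$ give $\lambda(\beta_u,\beta_v)=\dim\Hom(A_{\beta_v},A_{\beta_u})+\dim\Ext(A_{\beta_u},A_{\beta_v})$. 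Since Dynkin indecomposables are bricks ($\dim\Hom(A_\beta,A_\beta)=1$ and $\dim\Ext(A_\beta,A_\beta)=0$, both consequences of $\chi(A_\beta,A_\beta)=1$), bilinearity of $\Hom$ and $\Ext$ yields
\[ \sum_{u<v} m_um_v\,\lambda(\beta_u,\beta_v) \;=\; \big(\dim\Hom(M,M)-\textstyle\sum_u m_u^2\big) + \dim\Ext(M,M). \]
Proposition \ref{prop:stab} combined with the fact that $G_\eta$ is Zariski open in the algebra $\Hom(M,M)$ identifies $\dim\Hom(M,M)=\sum_u m_u^2$, and Voigt's lemma gives $\dim\Ext(M,M)=\codim_\C \eta$. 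Thus this sum collapses to $\codim_\C \eta$.

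Combining the two evaluations gives $2w=\codim_\C \eta+\dim V_\gamma$, and the desired identity then reduces to $\dim V_\gamma-\codim_\C \eta=\sum_i\gamma(i)^2-\sum_u m_u^2$, i.e., to $\dim_\C \eta=\dim_\C G_\gamma-\dim_\C G_\eta$, which is simply the orbit-stabilizer theorem together with Proposition \ref{prop:stab}. There is no serious obstacle here beyond careful sign bookkeeping and disciplined use of the two distinct ordering conventions of Section \ref{sec:order}; the content of the lemma is really an algebraic repackaging of Voigt's lemma and the Euler form calculation.
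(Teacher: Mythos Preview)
Your overall strategy is sound and more conceptual than the paper's direct coefficient check, but there is a genuine error: the claim $\dim\Hom(M,M)=\sum_u m_u^2$ is false, and Proposition~\ref{prop:stab} cannot rescue it. That proposition only records the \emph{homotopy type} of $G_\eta$; it does not compute $\dim_\C G_\eta$. The stabilizer $G_\eta=\mathrm{Aut}(M)$ is open in $\mathrm{End}(M)$, but $\mathrm{End}(M)$ typically has a nilpotent radical on top of its semisimple part $\prod_u \mathrm{Mat}_{m_u}$, so $\dim\Hom(M,M)\ge \sum_u m_u^2$ with strict inequality as soon as some $\Hom(A_{\beta_v},A_{\beta_u})$ with $u<v$ is nonzero. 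Already for $Q=A_2$ (the quiver $1\leftarrow 2$) with $M=A_{\alpha_1+\alpha_2}\oplus A_{\alpha_1}$ one has $\Hom(A_{\alpha_1},A_{\alpha_1+\alpha_2})=\C$, hence $\dim\Hom(M,M)=3$ while $\sum_u m_u^2=2$; correspondingly $G_\eta$ is a $3$-dimensional Borel-type subgroup, not $GL_1\times GL_1$.

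The fix is easy and in fact simplifies your endgame. Do not collapse the Hom term: keep
\[
\sum_{u<v} m_um_v\,\lambda(\beta_u,\beta_v)=\bigl(\dim\Hom(M,M)-\textstyle\sum_u m_u^2\bigr)+\dim\Ext(M,M),
\]
so that $2w=\dim\Hom(M,M)-\sum_u m_u^2+\codim_\C\eta+\dim V_\gamma$. Substituting into the lemma reduces the statement to
\[
\dim\Hom(M,M)-\dim\Ext(M,M)=\sum_i\gamma(i)^2-\dim V_\gamma=\chi(\gamma,\gamma),
\]
which is exactly $\chi(M,M)=\chi(\gamma,\gamma)$; no appeal to Proposition~\ref{prop:stab} or orbit--stabilizer is needed. (Equivalently, your orbit--stabilizer step is correct provided you use the true value $\dim_\C G_\eta=\dim\Hom(M,M)$ rather than $\sum_u m_u^2$; the two mistakes you made happen to cancel, but each individually is wrong.) By contrast, the paper expands $w$ all the way down to the $\lambda(\alpha_i,\alpha_j)$'s and then verifies the identity coefficient-by-coefficient in the $m_u$'s, never invoking $\Hom(M,M)$ or $\Ext(M,M)$ globally.
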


\begin{proof}
First we make calculations to get an expression for $w$. Formula (\ref{eq:AQ_def}) implies
\begin{equation} \label{eqn:AQ_comm}
y_{\gamma_1} y_{\gamma_2} = q^{\lambda(\gamma_1,\gamma_2)} y_{\gamma_2} y_{\gamma_1}
\end{equation}
Let $\beta=\sum d^i\alpha_i$ be a positive root. By repeated applications of (\ref{eq:AQ_def}) and (\ref{eqn:AQ_comm}) we obtain
$$y_\beta^m=(-1)^{m(\sum_i d^i -1)} \cdot  q^{-\frac{m^2}{2} \sum_{i<j\leq n} d^i d^j \lambda(\alpha_i,\alpha_j)} \cdot y_{\alpha_1}^{md^1} \cdots y_{\alpha_n}^{md^n}.$$
Using this for $\beta=\beta_1,\ldots,\beta_N$, and applying (\ref{eqn:AQ_comm}) further we get
$$y_{\beta_1}^{m_1} y_{\beta_2}^{m_2} \ldots y_{\beta_N}^{m_N} =
(-1)^{\sum_u m_u (\sum_i d_u^i-1)} \cdot q^w \cdot y_{\alpha_1}^{\gamma(1)} \ldots y_{\alpha_n}^{\gamma(n)} ,$$
where
$$w= -\sum_{u=1}^N \frac{m_u^2}{2} \sum_{i<j\leq n} d_u^i d_u^j \lambda(\alpha_i,\alpha_j) -
\sum_{u<v\leq N} m_u m_v \sum_{i<j\leq n} d_v^i d_u^j \lambda(\alpha_i,\alpha_j).$$
Second, we need an expression for $\codim \eta$. Using the property we required for the order of $\beta$'s we have
$$\codim_{\C} \eta = \dim \Ext(m_u A_{\beta_u}, m_uA_{\beta_u})=\sum_{u<v}m_um_v\dim \Ext(A_{\beta_u}, A_{\beta_v})= - \sum_{u<v}m_um_v \chi(A_{\beta_u},A_{\beta_v}).$$
Therefore, the coefficient of $m_u m_v$ $(u<v)$ in the expression in the Lemma is
\begin{equation}\label{eqn:a1}
0-\sum_{i=1}^n d_u^i d_v^i -\sum_{i<j} d_v^i d_u^j \lambda(\alpha_i,\alpha_j)  +  \chi(A_{\beta_u}, A_{\beta_v}).
\end{equation}
Notice that for $i<j$ the number $\lambda(\alpha_i,\alpha_j)$ is minus the number of arrows from $j$ to $i$, and there are no arrows from $i$ to $j$ (c.f. the ordering of the $\alpha$'s in Section \ref{sec:order}). Hence the sum of the first three terms of (\ref{eqn:a1}) equals $-\chi(A_{\beta_u}, A_{\beta_v})$ by definition. Hence expression (\ref{eqn:a1}) is 0.
The coefficient of $m_u^2$ in the expression in the Lemma is
\begin{equation}\label{eqn:semmi}
\frac{1}{2} \left( 1 - \sum_{i=1}^n (d^i_u)^2 - \sum_{i<j} d_u^i d_u^j \lambda(\alpha_i,\alpha_j) -0 \right)=\frac12\left( 1 - \chi(A_{\beta_u},A_{\beta_u})\right)=0.
\end{equation}
Here, again, we used that if $i<j$ then $\lambda(\alpha_i, \alpha_j)$ is minus the number of arrows from $j$ to $i$ and there are no arrows from $i$ to $j$. \end{proof}

\section{Quantum Dilog identities from the Kazarian spectral sequence}\label{sec:dilog}

Consider the quantum dilogarithm series
$$
\EE(z) = \sum_{n=0}^{\infty}  \frac{(-1)^n z^n \cdot q^{{n^2/2}}}{(1-q)(1-q^2)\cdots (1-q^n)}.
$$
A remarkable infinite product expression (not used in the present paper) is
$$\EE(z)=(1-q^{1/2}z)(1-q^{3/2}z)(1-q^{5/2}z)(1-q^{7/2}z)\ldots.$$
Putting $f_n=\sum_{i=0}^\infty q^i \dim H^{2i}(BGL_n)$, we can rewrite
$$
\EE(z)= \sum_{n=0}^\infty (-1)^n z^n q^{n^2/2} f_n.
$$

For the history and rich properties of quantum dilogarithm series see e.g. \cite{zagier, keller} and references their. In this section we will reprove a special case of Reineke's $\EE$-identities \cite{reineke_dilog} see also \cite{keller}. These identities generalize some earlier famous results, such as \cite{sch,fad1,fad2}.

\begin{theorem} (Reineke) \label{thm:dilog}
For a Dynkin quiver order the simple and positive roots satisfying the conditions in Section \ref{sec:order}. In the quantum algebra of the quiver we have the identity
\begin{equation}
\EE(y_{\alpha_1}) \EE(y_{\alpha_2})  \cdots \EE(y_{\alpha_n})  = \EE(y_{\beta_1}) \EE(y_{\beta_2}) \cdots \EE(y_{\beta_N}).
\end{equation}
\end{theorem}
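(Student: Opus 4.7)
The plan is to obtain the $\EE$-identity as the Poincaré-series content of the Kazarian spectral sequence (Theorem~\ref{thm:kaza}) applied to the quiver representation $G_\gamma \curvearrowright V_\gamma$, for every dimension vector $\gamma$, then use Lemma~\ref{lem:codim_in_AQ} to assemble these identities into the non-commutative product in $\A_Q$.

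First I would fix $\gamma\in\N^{Q_0}$ and run the Kazarian spectral sequence of $G_\gamma\curvearrowright V_\gamma$. Since $V_\gamma$ is a vector space, the sequence converges to $H^*(BG_\gamma)$, whose Poincaré series is $\prod_{i\in Q_0} f_{\gamma(i)}$. By Gabriel's theorem the orbits are parametrised by tuples $(m_u)_{u=1}^N\in\N^N$ with $\sum_u m_u d_u^i=\gamma(i)$; by Proposition~\ref{prop:stab} the stabiliser of such an orbit $\eta$ has the homotopy type of $\prod_u U(m_u)$, and by Voigt's lemma its complex codimension equals $\dim\Ext(M,M)$. Because every $BU(m)$ has only even cohomology, the $E_1$ page of the Kazarian sequence sits entirely in even bidegrees, so the sequence degenerates at $E_1$. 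Reading off Poincaré series and remembering the real-codimension shift yields the dimension identity
\begin{equation}\label{eq:plan_poincare}
\prod_{i\in Q_0} f_{\gamma(i)} \;=\; \sum_{\eta} q^{\,\codim_{\C}\eta}\,\prod_{u=1}^N f_{m_u(\eta)},
\end{equation}
where the sum runs over orbits $\eta$ of $G_\gamma$ on $V_\gamma$.

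Next I would expand both sides of the asserted identity in $\A_Q$ directly from the definition of $\EE$. The left hand side is $\sum_\gamma (-1)^{|\gamma|} q^{\sum_i \gamma(i)^2/2} \bigl(\prod_i f_{\gamma(i)}\bigr) y_{\alpha_1}^{\gamma(1)}\cdots y_{\alpha_n}^{\gamma(n)}$, where $|\gamma|=\sum_i\gamma(i)$. The right hand side, before normal-ordering, is $\sum_{(m_u)} (-1)^{\sum_u m_u} q^{\sum_u m_u^2/2} \bigl(\prod_u f_{m_u}\bigr) y_{\beta_1}^{m_1}\cdots y_{\beta_N}^{m_N}$. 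Applying Lemma~\ref{lem:codim_in_AQ} to each monomial $y_{\beta_1}^{m_1}\cdots y_{\beta_N}^{m_N}$, the exponent of $q$ collapses to $\codim_{\C}\eta + \sum_i\gamma(i)^2/2$, and the sign collapses to $(-1)^{|\gamma|}$ since $\sum_u m_u+\sum_u m_u(\sum_i d_u^i-1)=\sum_i\sum_u m_u d_u^i = |\gamma|$. Therefore the coefficient of $y_{\alpha_1}^{\gamma(1)}\cdots y_{\alpha_n}^{\gamma(n)}$ on the right is $(-1)^{|\gamma|} q^{\sum_i\gamma(i)^2/2} \sum_\eta q^{\codim_{\C}\eta}\prod_u f_{m_u}$, which matches the left hand side precisely when \eqref{eq:plan_poincare} holds.

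So the proof reduces to the Poincaré-series identity \eqref{eq:plan_poincare}, which is supplied by the Kazarian spectral sequence together with degeneration at $E_1$. The main obstacle I expect is the bookkeeping rather than any deep step: one has to verify that the $q$-exponent $w$ produced by Lemma~\ref{lem:codim_in_AQ}, after combining with the $q^{\sum_u m_u^2/2}$ coming from $\EE$ and the $q^{\sum_i\gamma(i)^2/2}$ attached to the left hand side, matches the $q^{\codim_{\C}\eta}$ weight contributed by the Kazarian filtration, and that the signs cancel as indicated. Once that matching is done, the theorem is immediate from Theorem~\ref{thm:kaza}, Proposition~\ref{prop:stab} and the vanishing of odd cohomology of classifying spaces of unitary groups.
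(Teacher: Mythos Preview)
Your proposal is correct and follows essentially the same route as the paper: you derive the Poincar\'e identity $\prod_i f_{\gamma(i)}=\sum_\eta q^{\codim_\C\eta}\prod_u f_{m_u}$ from the degeneration of the Kazarian spectral sequence (via Theorem~\ref{thm:kaza} and Proposition~\ref{prop:stab}), then use Lemma~\ref{lem:codim_in_AQ} to match the coefficients of $y_{\alpha_1}^{\gamma(1)}\cdots y_{\alpha_n}^{\gamma(n)}$ on both sides, with exactly the sign and $q$-exponent cancellations you describe. The paper carries out precisely these steps in the same order.
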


\begin{proof} Let $\gamma(1),\ldots,\gamma(n)$ be non-negative integers. We will consider the coefficient of
$y_{\alpha_1}^{\gamma(1)}\cdots y_{\alpha_n}^{\gamma(n)}$ on the two sides. On the left hand side this coefficient is obviously
\begin{equation}\label{eqn:LHS}
(-1)^{ \sum_i \gamma(i)} q^{ \frac12 \gamma(i)^2} \cdot f_{\gamma(1)} \cdots f_{\gamma(n)}.
\end{equation}
On the right hand side we need to write a monomial in $y_{\beta_u}$'s as a monomial in the $y_{\alpha_i}$'s. This is solved in Lemma \ref{lem:codim_in_AQ}. Hence, for the coefficient on the right hand side we obtain
\begin{equation} \label{eqn:RHS}
\sum_m (-1)^{\sum m_u} q^{\frac12 \sum_u m_u^2} f_{m_1} \cdots f_{m_N} \left( (-1)^{\sum_i m_i(\sum_u d^i_u -1)}
q^{- \frac12 \sum_u m_u^2} q^{\frac12 \sum_i \gamma(i)^2} q^{\codim_{\C} \eta_m}\right),
\end{equation}
where the summation runs for those $m=(m_1,\ldots,m_N)$ for which $\sum_{u=1}^N m_u d_u^i=\gamma(i)$ for all $i$. As before, $\eta_m$ is the orbit in the quiver representation corresponding to the module $\sum_u m_u A_{\beta_u}$.
The expression (\ref{eqn:RHS}) is further equal to
\begin{equation}\label{eqn:RHS2}
\sum_m (-1)^{\sum_i \gamma(i)} q^{\frac12 \sum_i \gamma(i)^2} q^{\codim_{\C} \eta_m} \cdot f_{m_1} \cdots f_{m_N}.
\end{equation}
We need to show that expressions (\ref{eqn:LHS}) and (\ref{eqn:RHS2}) are equal.

Consider the Kazarian spectral sequence for the representation with dimension vector $\gamma$. The orbits of this representation have contributions to the $E_1$ page. Namely, let $m=(m_1,\ldots,m_N)$ be such that $\sum_u m_u d^i_u=\gamma(i)$ for all $i$. Then $\eta_m$ is an orbit, and its contribution to the $E_1$ page is (see Theorem \ref{thm:kaza})
$$E_1^{\codim_{\R}\eta_m , j} = H^j(BG_{\eta_m}).$$
Recall from Proposition \ref{prop:stab} that $G_{\eta_m}=\times_u GL_{m_u}$.
Thus, the spectral sequence degenerates at $E_1$. The limit of the spectral sequence is
$H^*(B ( GL_{\gamma(1)} \times \ldots \times GL_{\gamma(n)}))$.

Therefore we obtain an identity for the Betti numbers:
\begin{equation} \label{eqn:fromKss}
\sum_m q^{\codim_{\C} \eta_m} f_{m_1} \cdots f_{m_N} = f_{\gamma(1)} \cdots f_{\gamma(n)}.
\end{equation}
Identity (\ref{eqn:fromKss}) shows that (\ref{eqn:LHS}) is indeed equal to (\ref{eqn:RHS2}).
\end{proof}

\section{COHA of $Q$}

In this section we follow \cite{ks_coha}, and repeat the definition of the Cohomological Hall Algebra (without potential) associated with $Q$.

For a dimension vector $\gamma\in \N^{Q_0}$ define $\HH_{\gamma}=H^*_{G_\gamma}(V_\gamma)$. As a vector space, the COHA of $Q$ is $$\HH=\bigoplus_{\gamma\in \N^{Q_0}} \HH_{\gamma}.$$

\subsection{Geometric definition of the multiplication \cite{ks_coha}}

Let $\gamma_1$ and $\gamma_2$ be dimension vectors. Let the group $G_{\gamma_1,\gamma_2}$ be the subgroup of $G_{\gamma_1+\gamma_2}$ containing $n$-tuples of matrices such that the matrix at vertex $i$ keeps $\C^{\gamma_1}\subset \C^{\gamma_1+\gamma_2}$ invariant (that is, it is upper block-diagonal of size $\gamma_1$, $\gamma_2$). Let $V_{\gamma_1,\gamma_2}$ be the subspace of $V_{\gamma_1+\gamma_2}$ containing linear maps such that the map at edge $a$ maps $\C^{\gamma_1(t(a))}$ into $\C^{\gamma_1(h(a))}$.

The multiplication $*:\HH_{\gamma_1} \otimes \HH_{\gamma_2} \to \HH_{\gamma_1+\gamma_2}$ is defined as the composition
$$H^*_{G_{\gamma_1}}(V_{\gamma_1}) \otimes H^*_{G_{\gamma_2}}(V_{\gamma_2}) \xrightarrow{\times}
H^*_{G_{\gamma_1}\times G_{\gamma_2}}(V_{\gamma_1}\oplus V_{\gamma_2})\xrightarrow{\cong}\hskip 5 true cm \ $$
$$\ \hskip 5 true cm H^*_{G_{\gamma_1,\gamma_2}}(V_{\gamma_1,\gamma_2}) \xrightarrow{\iota_*}
H^*_{G_{\gamma_1,\gamma_2}}(V_{\gamma_1+\gamma_2}) \xrightarrow{\pi_*}
H^*_{G_{\gamma_1 +\gamma_2}}(V_{\gamma_1+\gamma_2}).$$
Here the first map is the product map of algebraic topology. The second map is induced by the obvious equivariant homotopy equivalence. The third map is the push-forward with respect to the embedding $\iota: V_{\gamma_1,\gamma_2} \to V_{\gamma_1+\gamma_2}$. The last map is the push-forward with respect to the fibration $\pi: BG_{\gamma_1,\gamma_2} \to BG_{\gamma_1+\gamma_2}$ (with fiber $G_{\gamma_1+\gamma_2}/G_{\gamma_1,\gamma_2}$).

The multiplication $*$ induced on $\HH$ is associative. Although it respects the dimension vector grading, its relation with the cohomological degree grading is
$$H^{k_1}_{G_{\gamma_1}}(V_{\gamma_1}) * H^{k_2}_{G_{\gamma_2}}(V_{\gamma_2 })  \subset H^{k_1+k_2-2\chi(\gamma_1,\gamma_2)}_{G_{\gamma_1+\gamma_2}}(V_{\gamma_1+\gamma_2}).$$

\subsection{Equivariant Localization formula for multiplication \cite{ks_coha}} \label{sec:localization}

Consider
\begin{equation}
\begin{split}
\HH_\gamma=& \C[\om_{1,1},\ldots, \om_{1,\gamma(1)}, \ \ \ldots\ \ ,\om_{n,1},\ldots, \om_{n,\gamma(n)} ]^{S_{\gamma(1)}\times \ldots \times S_{\gamma(n)}}\\
\end{split}
\end{equation}
where $\om_{i,j}$ are the ``universal'' Chern roots of the tautological bundles over $BGL_{\gamma(i)}$'s. 

Let $f_1\in \HH_{\gamma_1}$ and $f_2\in \HH_{\gamma_2}$. We have

\begin{equation} \label{eqn:localization}
f_1 * f_2 =
\sum_{S_1 \in \binom{ [\gamma_1(1)+\gamma_2(1)] }{\gamma_1(1)} }\ldots \sum_{S_n \in \binom{ [\gamma_1(n)+\gamma_2(n)] }{\gamma_1(n)} }
f_1(\omega_{i,S_i}) f_2(\omega_{i,\bar{S}_i})
\frac{   \prod_{a\in Q_1}  \left( {\omega}_{h(a),\bar{S}_{h(a)}} - \omega_{t(a),S_{t(a)}} \right) }
{\prod_{i\in Q_0}  \left( {\omega}_{i,\bar{S}_i} - \omega_{i,S_i} \right) },
\end{equation}
where $S_i\in \binom{[\gamma_1(i)+\gamma_2(i)]}{\gamma_1(i)}$ means that $S_i$ is a $\gamma_1(i)$-element subset of $\{1,\ldots,\gamma_1(i)+\gamma_2(i)\}$; $\bar{S_i}$ means the complement set $\{1,\ldots,\gamma_1(i)+\gamma_2(i)\}-S_i$. By $(\omega_{j,\bar{S}_j} - \omega_{i,S_i})$ we mean the
product $\prod_{u\in \bar{S}_j} \prod_{v\in S_i} (\omega_{j,u} - \omega_{i,v})$.

This localization formula has an obvious, but notation heavy, generalization for multi-factor products $f_1 * \ldots * f_r$.

\medskip

For example, for the quiver $1 \leftarrow 2$ one has
$$ \left( 1 \in \HH_{10} \right) * \left( 1 \in \HH_{01} \right) =  1 \hskip 1.6 true cm \in \HH_{11}=\C[\omega_{1,1},\omega_{2,1}] ; $$
$$ \left( 1 \in \HH_{01} \right) * \left( 1 \in \HH_{10} \right) =  \omega_{1,1}-\omega_{2,1} \in \HH_{11} =\C[\omega_{1,1},\omega_{2,1}]. $$

\section{Another geometric interpretation of the COHA multiplication} \label{sec:geo}

Let $\gamma_1, \ldots, \gamma_r$ be dimension vectors, and let $\gamma=\sum_{u=1}^r \gamma_u$. Let $f_u\in \HH_{\gamma_u}$. In this section we give another geometric interpretation of the COHA product $f_1 * \ldots * f_r \in \HH_\gamma$.

\subsection{First version}

Given nonnegative integers $\lambda_1,\ldots,\lambda_r$ let $\Fl_{\lambda}$ be the flag manifold parameterizing flags
$$ \{0\}=V_0 \subset V_1 \subset V_2 \subset \ldots \subset V_r=\C^{\sum \lambda_u}$$
with $\dim V_u/V_{u-1} = \lambda_u$. Set
$$\Fl=\Fl_{\gamma_1,\ldots,\gamma_r}=\times_{i=1}^n \Fl_{\gamma_1(i),\ldots,\gamma_r(i)}.$$
The tautological rank $\sum_{v=1}^u \gamma_v(i)$ bundle over $\Fl_{\gamma_1(i),\ldots,\gamma_r(i)}$ will be denoted by $\E_{i,u}$ and we set $\F_{i,u}=\E_{i,u}/\E_{i,u-1}$. We have $\rk \F_{i,u}=\gamma_u(i)$. The $\E_{..}$ and $\F_{..}$  bundles pulled back over $\Fl$ will be denoted by the same letter. Denote
$$\G= \bigoplus_{a\in Q_1} \bigoplus_{u<v} \Hom( \F_{t(a),u}, \F_{h(a),v}).$$
Observe that the group $G_{\gamma}$ acts on $\Fl$. The equivariant Euler class of $\G$ will be denoted by $e(\G)\in H_{G_\gamma}^*(\Fl)$.

\begin{lemma} \label{lem:star_int}
For $f_u\in \HH_{\gamma_u}$,  $u=1,\ldots,r$, we have
\begin{equation}\label{eq:star_with_int}
f_1 * \ldots * f_r = \int_{\Fl} \ \ \prod_{u=1}^r f_u(\F_{.,u}) \cdot e(\G).
\end{equation}
\end{lemma}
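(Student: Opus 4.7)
The strategy is to trace the Kontsevich-Soibelman definition of $*$ (iterated to $r$ factors) and reinterpret each map geometrically on the flag bundle $\Fl$. Generalizing the two-factor definition, $f_1*\ldots*f_r$ is the composition of three steps: (i) the external product, which lands in $H^*_{G_{\gamma_1}\times\ldots\times G_{\gamma_r}}(V_{\gamma_1}\oplus\ldots\oplus V_{\gamma_r})$ and is lifted to $H^*_{G_{\gamma_1,\ldots,\gamma_r}}(V_{\gamma_1,\ldots,\gamma_r})$ via the equivariant homotopy equivalences given by the Levi inclusion $G_{\gamma_1}\times\ldots\times G_{\gamma_r}\hookrightarrow G_{\gamma_1,\ldots,\gamma_r}$ and $V_{\gamma_1}\oplus\ldots\oplus V_{\gamma_r}\hookrightarrow V_{\gamma_1,\ldots,\gamma_r}$; (ii) the push-forward $\iota_*$ along the linear embedding $\iota:V_{\gamma_1,\ldots,\gamma_r}\hookrightarrow V_\gamma$; and (iii) the fibration push-forward $\pi_*$ along $\pi:BG_{\gamma_1,\ldots,\gamma_r}\to BG_\gamma$.

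The key identification is the fiber sequence $\Fl=G_\gamma/G_{\gamma_1,\ldots,\gamma_r}\hookrightarrow BG_{\gamma_1,\ldots,\gamma_r}\xrightarrow{\pi}BG_\gamma$, which yields an isomorphism $H^*(BG_{\gamma_1,\ldots,\gamma_r})\cong H^*_{G_\gamma}(\Fl)$. Under this isomorphism, $\pi_*$ becomes fiber integration $\int_\Fl$, and the universal Chern roots of the diagonal Levi blocks $GL_{\gamma_u(i)}\subset G_{\gamma_1,\ldots,\gamma_r}$ correspond exactly to the Chern roots of the tautological bundles $\F_{i,u}$. Thus the external product $f_1\times\ldots\times f_r$ is carried to $\prod_u f_u(\F_{.,u})\in H^*_{G_\gamma}(\Fl)$. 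For step (ii), $\iota$ is a linear embedding between equivariantly contractible affine spaces, so $\iota_*$ is multiplication by the equivariant Euler class of the normal bundle $V_\gamma/V_{\gamma_1,\ldots,\gamma_r}$. A direct block computation at each edge $a\in Q_1$ shows that the filtration-preservation condition kills precisely the components $\F_{t(a),u}\to\F_{h(a),v}$ with $u<v$, so the normal representation transported to $\Fl$ is exactly $\G$.

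Combining the three steps gives $f_1*\ldots*f_r=\pi_*\iota_*(f_1\times\ldots\times f_r)=\int_\Fl\prod_u f_u(\F_{.,u})\cdot e(\G)$. The one step requiring care is the identification $H^*(BG_{\gamma_1,\ldots,\gamma_r})\cong H^*_{G_\gamma}(\Fl)$ together with the compatibility of $\pi_*$ with fiber integration and of the Levi Chern roots with the Chern roots of the tautological $\F_{i,u}$; once this dictionary is in place, the normal-bundle block computation and the Euler-class interpretation of $\iota_*$ are routine.
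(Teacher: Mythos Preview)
Your argument is correct, but it is genuinely different from the paper's. The paper proves Lemma~\ref{lem:star_int} in one line: it simply observes that applying equivariant (torus) localization to the right-hand side $\int_{\Fl}\prod_u f_u(\F_{.,u})\cdot e(\G)$ yields precisely the multi-factor version of formula~(\ref{eqn:localization}), which by definition computes $f_1*\ldots*f_r$. No geometric identification of $BG_{\gamma_1,\ldots,\gamma_r}$ with $B_{G_\gamma}\Fl$, no normal-bundle analysis of $\iota$, and no tracing of the three maps is carried out.

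Your route is more conceptual: you unpack the Kontsevich--Soibelman definition step by step and translate each map into flag-bundle language (Levi Chern roots $\leftrightarrow$ Chern roots of $\F_{i,u}$, $\iota_*\leftrightarrow$ cup with $e(\G)$, $\pi_*\leftrightarrow\int_{\Fl}$). This has the advantage of explaining \emph{why} the integral formula holds, and it does not rely on localization at all; it would go through in cohomology theories where fixed-point localization is unavailable. The paper's approach, by contrast, is a one-line computational verification that both sides have the same localization expansion, which is quicker and entirely adequate given that formula~(\ref{eqn:localization}) has already been stated. One small point you leave implicit is that the iterated two-factor product agrees with the single-step $r$-factor parabolic construction; this is standard (compatibility of push-forwards in a tower of fibrations) but strictly speaking is part of the claim.
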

Since $f_u\in \HH_{\gamma_u}$, we can evaluate $f_u$ on a sequence of bundles of ranks $\gamma_u(1),$ $\ldots,$ $\gamma_u(n)$. Hence $f_u(\F_{.,u})\in H_{G_\gamma}^*(\Fl)$ makes sense. The integral $\int_{\Fl}$ is the standard push-forward map $H^*_{G_\gamma}(\Fl) \to H^*_{G_\gamma}($point$)$ in equivariant cohomology. The lemma holds because the localization formula for the map in the Lemma is the same as (the multi-factor version of) (\ref{eqn:localization}).

\subsection{An improved version} \label{sec:improved}
Consider the projection $\pi$ to the second factor
$$\pi: \Fl_{\gamma_1,\ldots,\gamma_r} \times V_\gamma \to V_\gamma.$$

\begin{lemma} \label{lem:star_int_impr}
For $f_u\in \HH_{\gamma_u}$, $u=1,\ldots,r$, we have
\begin{equation}\label{eq:star_with_int2}
f_1 * \ldots * f_r = \pi_*\left( \ \ \prod_{u=1}^r f_u(\F_{.,u}) \cdot e(\G) \right).
\end{equation}
\end{lemma}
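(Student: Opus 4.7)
The plan is to extract Lemma \ref{lem:star_int_impr} from Lemma \ref{lem:star_int} via the observation that the integrand is pulled back from $\Fl$, and to complement this with a conceptual geometric derivation that makes the bridge to the original Kontsevich-Soibelman definition transparent.

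For the short route I would proceed as follows. The class $\prod_{u=1}^r f_u(\F_{.,u}) \cdot e(\G)$ is a polynomial in Chern classes of the bundles $\F_{i,u}$ and $\G$, all of which live on $\Fl$; so it is the pullback $p_1^*\alpha$ along the projection $p_1 \colon \Fl \times V_\gamma \to \Fl$ of a class $\alpha \in H^*_{G_\gamma}(\Fl)$. Since the fibers of $\pi$ are compact and equal to $\Fl$, fiber integration gives $\pi_*(p_1^*\alpha) = (\int_\Fl \alpha)\cdot 1_{V_\gamma}$. Finally, $V_\gamma$ is equivariantly contractible (it is a linear $G_\gamma$-representation), so pulling back to the origin identifies $H^*_{G_\gamma}(V_\gamma)$ with $H^*_{G_\gamma}(\mathrm{pt})$, and under this identification $1_{V_\gamma}\mapsto 1$. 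Combining these observations equates the right-hand side of (\ref{eq:star_with_int2}) with the right-hand side of (\ref{eq:star_with_int}), and Lemma \ref{lem:star_int} concludes.

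For the more geometric point of view---which matches the KS definition piece by piece and makes the title of the section explicit---I would use $G_\gamma / G_{\gamma_1,\ldots,\gamma_r} \cong \Fl$ to identify $B G_{\gamma_1,\ldots,\gamma_r}$ with the $\Fl$-fibration $EG_\gamma \times_{G_\gamma} \Fl$, and to identify the $G_{\gamma_1,\ldots,\gamma_r}$-space $V_{\gamma_1,\ldots,\gamma_r}$ with the ``universal subrepresentation'' vector subbundle $\widetilde V \subset \Fl \times V_\gamma$ whose fiber over a flag $F_\bullet$ is the space of quiver representations preserving $F_\bullet$ at each vertex. Under these identifications: the external product $f_1 \times \cdots \times f_r$ becomes $\prod_u f_u(\F_{.,u}) \in H^*_{G_\gamma}(\widetilde V) \cong H^*_{G_\gamma}(\Fl)$, the last equality by the vector bundle retraction; the embedding push-forward $\iota_*$ becomes push-forward along $\widetilde V \hookrightarrow \Fl \times V_\gamma$, which by the projection formula multiplies a pulled-back class by the Euler class of the normal bundle; and the change-of-group push-forward becomes $\pi_* \colon H^*_{G_\gamma}(\Fl \times V_\gamma) \to H^*_{G_\gamma}(V_\gamma)$. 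Formula (\ref{eq:star_with_int2}) then follows once one verifies that the normal bundle of $\widetilde V$ inside $\Fl \times V_\gamma$ is exactly $\G$.

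The only non-routine step is this normal bundle identification. It requires decomposing each map $\phi_a \colon \C^{\gamma(t(a))} \to \C^{\gamma(h(a))}$ into blocks $\F_{t(a),u}\to \F_{h(a),v}$ compatible with the two flags at $t(a)$ and $h(a)$, and recognizing that preserving the flags is exactly the vanishing of the blocks with $u<v$, so the complementary directions reassemble into $\bigoplus_{a,\,u<v}\Hom(\F_{t(a),u},\F_{h(a),v}) = \G$. Once this bookkeeping is in place, both routes yield the stated formula.
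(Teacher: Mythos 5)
Your short route is exactly the paper's argument, just spelled out: the paper dismisses the lemma in one sentence as equivalent to Lemma~\ref{lem:star_int} because $V_\gamma$ is equivariantly contractible, and your base-change computation $\pi_*(p_1^*\alpha)=(\int_{\Fl}\alpha)\cdot 1_{V_\gamma}$ together with the identification $H^*_{G_\gamma}(V_\gamma)\cong H^*_{G_\gamma}(\mathrm{pt})$ is precisely the content of that sentence. So far you match the paper.

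Your second route is a genuinely different and more self-contained derivation. The paper never proves Lemma~\ref{lem:star_int_impr} directly from the Kontsevich--Soibelman definition; instead it relies on the localization check in Lemma~\ref{lem:star_int}. You replace that check by working through the KS push-pull diagram geometrically: $BG_{\gamma_1,\ldots,\gamma_r}\simeq EG_\gamma\times_{G_\gamma}\Fl$ turns the change-of-group pushforward into $\pi_*$; the Borel construction on $V_{\gamma_1,\ldots,\gamma_r}$ becomes the vector subbundle $\widetilde V\subset\Fl\times V_\gamma$ of flag-preserving representations; and $\iota_*$ is then multiplication by $[\widetilde V]=e(\G)$, which follows because $\G$ is the quotient bundle (the blocks $\F_{t(a),u}\to\F_{h(a),v}$ with $u<v$ form the associated graded of the normal bundle, and Euler classes are insensitive to the filtration). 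This route is essentially the geometry behind Reineke's construction that the paper alludes to but does not carry out, and it buys you a proof of both Lemmas~\ref{lem:star_int} and \ref{lem:star_int_impr} without invoking the equivariant localization formula \eqref{eqn:localization}. One small imprecision: the phrase ``$\iota_*$ \ldots multiplies a pulled-back class by the Euler class of the normal bundle'' should be stated via the projection formula $\iota_*(\iota^*\beta)=\beta\cdot[\widetilde V]$ together with the fact that $[\widetilde V]$ equals $e(\G)$ under $H^*_{G_\gamma}(\Fl\times V_\gamma)\cong H^*_{G_\gamma}(\Fl)$; as written it sounds like an excess-intersection identity on $\widetilde V$ itself. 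With that clarification, both of your routes are correct.
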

\noindent This lemma is equivalent to Lemma \ref{lem:star_int}, since $V_\gamma$ is (equivariantly) contractible.
An advantage of this version is the existence of a natural subvariety in the total space whose equivariant fundamental class is  $e(\G)$. Namely, define the ``consistency subset'' (c.f. \cite{reineke})
$$\Sigma =
\left\{
\left(
(V_{i,u})_{{i=1,\ldots,n}\atop {u=1,\ldots,r}},
(\phi_a)_{a\in Q_1} \right)
\in \Fl_{\gamma_1,\ldots,\gamma_r} \times V_\gamma
:
\phi_a(V_{t(a),u})\subset V_{h(a),u}
\
\forall a,u
\right\}
$$
of $\Fl_{\gamma_1,\ldots,\gamma_r} \times V_\gamma$.

\begin{lemma} \label{lem:Sigma-lemma}
We have $[\Sigma]=e(\G)$.
\end{lemma}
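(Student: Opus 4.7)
The plan is to realize $\Sigma$ as the zero locus of a fiberwise-linear transverse section of (a pullback of) $\G$, and then invoke the standard identification of the equivariant fundamental class of a transverse zero locus with the Euler class of the target bundle.

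First I will observe that $\Sigma$ is the total space of a $G_\gamma$-equivariant vector subbundle of the trivial bundle $\Fl \times V_\gamma \to \Fl$: at each flag, the defining condition $\phi_a(V_{t(a),u}) \subset V_{h(a),u}$ is linear in $(\phi_a)$, and a block-decomposition dimension count (with respect to any splitting of the flag) shows that the codimension in each $V_\gamma$-fiber is constantly $\sum_a \sum_{u<v} \gamma_u(t(a))\gamma_v(h(a)) = \rk \G$. Let $Q$ denote the quotient bundle of $V_\gamma$ by this subbundle, viewed as a bundle over $\Fl$.

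Next I will identify the Euler class of $Q$ with $e(\G)$. The bundle $V_\gamma$ over $\Fl$ carries a natural $G_\gamma$-equivariant filtration $K^0 \subset K^1 \subset \cdots \subset K^{r-1} = V_\gamma$, where $K^s$ is the subbundle of tuples with $\phi_a(\E_{t(a),u}) \subset \E_{h(a),u+s}$ for every $u$ (with the convention $\E_{i,v} = \C^{\gamma(i)}$ for $v \geq r$), and $K^0$ is the subbundle with total space $\Sigma$. For $\phi \in K^s$ the induced map $\E_{t(a),u} \to \F_{h(a),u+s}$ automatically annihilates $\E_{t(a),u-1}$ (applying $\phi \in K^s$ at index $u-1$ gives $\phi(\E_{t(a),u-1}) \subset \E_{h(a),u+s-1}$), so it descends to a canonical map $\F_{t(a),u} \to \F_{h(a),u+s}$. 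This yields an identification $K^s/K^{s-1} \cong \bigoplus_{a}\bigoplus_{u} \Hom(\F_{t(a),u},\F_{h(a),u+s})$; summing over $s \geq 1$ identifies the associated graded of the induced filtration on $Q$ with $\G$. By multiplicativity of the Euler class in short exact sequences, $e(Q) = e(\G)$.

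To finish, the bundle projection $V_\gamma \to Q$ over $\Fl$ pulls back to a section $\sigma$ of (the pullback of) $Q$ over $\Fl \times V_\gamma$. Restricted to each fiber $\{x\}\times V_\gamma$, $\sigma$ is the surjective linear map $V_\gamma \to Q_x$, so $\sigma$ is transverse to the zero section and its zero locus is exactly $\Sigma$. The usual principle then yields $[\Sigma] = e(Q) = e(\G)$ in $H^*_{G_\gamma}(\Fl \times V_\gamma) \cong H^*_{G_\gamma}(\Fl)$. The main obstacle will be the precise identification of the associated graded of $Q$ with $\G$—a short but slightly delicate bookkeeping argument with filtrations; every other step is a straightforward dimension count or an appeal to a standard principle.
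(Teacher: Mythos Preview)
Your argument is correct. Both your proof and the paper's proof realize $\Sigma$ as the transverse zero locus of a section of a bundle whose Euler class agrees with $e(\G)$, and then invoke the standard identification $[\Sigma]=e(\text{bundle})$. The difference is in how that agreement of Euler classes is established.

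The paper works \emph{locally}: near each torus fixed point it chooses splittings $\bar{\F}_{i,u}$ of the tautological filtration, builds the bundle $\bar{\G}=\bigoplus_{a}\bigoplus_{u<v}\Hom(\bar{\F}_{t(a),u},\F_{h(a),v})$ and its tautological section, checks $[\Sigma]|_p=e(\bar{\G})|_p=e(\G)|_p$, and then concludes globally by equivariant localization (classes are determined by their fixed-point restrictions). You instead work \emph{globally}: you take the genuine quotient bundle $Q=(\Fl\times V_\gamma)/K^0$, equip it with the natural filtration $K^s/K^0$, identify the associated graded with $\G$, and deduce $e(Q)=e(\G)$ from multiplicativity of the Euler class in short exact sequences. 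Your route avoids both the non-canonical local splittings and the appeal to localization, at the cost of the small filtration bookkeeping you flagged; the paper's route is shorter but leans on the torus-fixed-point machinery. Either way the content is the same: $\G$ and the normal bundle of $\Sigma$ differ only by a filtration, so their Euler classes coincide.
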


\begin{proof}
Let $p$ be a torus fixed point of $\Sigma$. In a neighborhood of $p$ let us choose subbundles $\bar{\F}_{i,u} \subset \E_{i,u}$ such that $\bar{\F}_{i,u}\oplus \E_{i,u-1}=\E_{i,u}$. We have a tautological section $\Theta$ of the bundle $\bar{\G}=\oplus_{a\in Q_1} \oplus_{u<v} \Hom(\bar{\F}_{t(a),u},\F_{h(a),v})$. The zero-section of $\Theta$ is exactly $\Sigma$, and it can be shown that $\Theta$ is transversal to the 0-section. Hence, at $p$ we have $[\Sigma]|_p=e(\bar{\G})|_p=e(\G)|_p$. Since this holds at every torus fixed point, we have $[\Sigma]=e(\G)$.
\end{proof}

\section{Another formula for the COHA multiplication} \label{sec:res}

In recent works on Thom polynomials of singularities as well as on formulas for quiver polynomials certain iterated residue descriptions turned out to be useful. This method was pioneered in \cite{bsz}, then worked out in \cite{kaza:gysin}, see also \cite[Sect.11]{ts_loc}, \cite{lengyel}, \cite{RRqr}.

In this section we show the iterated residue formula for the COHA multiplication. Since we will not need this formula in the rest of the paper, we will not give a formal proof how to turn a localization formula to a residue formula.

\begin{definition}
Let $Q$ be a Dynkin quiver and $\gamma$ a dimension vector.
For a vertex $i\in Q_0$ define its tail $T(i)=\{j\in Q_0: \exists (j,i)\in Q_1\}$.
For $\lambda\in\Z^r$ define
$$\Delta_\lambda^{(i)}=\det\left( c_{i,\lambda_u+v-u} \right)_{u,v=1,\ldots,r} \in \HH_{\gamma},$$
where $c_{i,<0}=0$ and
$$c_{i,0}+c_{i,1}\xi + c_{i,2} \xi^2 + \ldots =
\frac{ \prod_{j\in T(i)}\prod_{u=1}^{\gamma(j)} (1-\omega_{j,u}\xi)}
{\prod_{u=1}^{\gamma(i)} (1-\omega_{i,u}\xi)}.$$
\end{definition}

Let $\A_i=(a_{i,1},a_{i,2},\ldots, a_{i,r_i} )$ be ordered sets of variables for $i=1,\ldots,n$. The following operation can be called ``Jacobi-Trudi transform'' \cite{ryan} or iterated residue operation \cite{RRqr}.

\begin{definition}
For a Laurent monomial in the variables $\cup_i \A_i$ define
\[
\DDelta
\left(\prod_{i=1}^n \prod_{s=1}^{r_j} a_{i,s}^{\lambda_{i,s}}
 \right)=
\DDelta_{\A_1,\ldots,\A_p}
\left(
\prod_{i=1}^n \prod_{s=1}^{r_i} a_{i,s}^{\lambda_{i,s}}
 \right)
 =
\prod_{i=1}^n \Delta_{\lambda_{i,1},\ldots,\lambda_{i,r_i}}^{(i)}
\]
For an element of $\Z[[a_{ks}^{\pm1}]]$, which has finitely many monomials with non-0 $\DDelta$-value, extend this operation linearly.
\end{definition}

\medskip

Let $f_1\in \HH_{\gamma_1}$, $f_2\in \HH_{\gamma_2}$. Let $\A_i$ and $\B_i$ be sets of variables with $|\A_i|=\gamma_1(i)$, $|\B_i|=\gamma_2(i)$.
Suppose $f_1=\Delta_{\A_1,\ldots,\A_N}(g)$ for some function $g(a_{.,.})$. Let $k_i=\sum_{j\in T(i)} |\A_j| - |\A_i|$.

\begin{theorem}
We have
\begin{equation}
f_1 * f_2 = \DDelta_{\B_1\A_1,\ldots,\B_n\A_n}\left(
g \cdot \frac{ \prod_i \B_i^{k_i} \cdot f_2(\B)}{\prod_{a\in Q_1} \left( 1- \frac{\B_{t(a)}}{\A_{h(a)}} \right)\left( 1- \frac{\B_{t(a)}}{\B_{h(a)}} \right)}
\right) \in \HH_{\gamma_1+\gamma_2}.
\end{equation}
\end{theorem}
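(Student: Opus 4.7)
The plan is to deduce the theorem from the equivariant localization formula~(\ref{eqn:localization}) by recognizing the subset-indexed sum there as the output of the Jacobi--Trudi/iterated residue operation $\DDelta$ applied to a suitable rational expression. This is the ``turning a localization formula into a residue formula'' step alluded to at the start of the section and used in \cite{bsz, kaza:gysin, RRqr}.

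\textbf{Step 1: reindexing.} For each vertex $i$, I would split the $\gamma(i)=\gamma_1(i)+\gamma_2(i)$ Chern roots $\omega_{i,u}$ in~(\ref{eqn:localization}) into an $\A_i$-portion (indexed by $S_i$, of size $\gamma_1(i)$) and a $\B_i$-portion (indexed by $\bar S_i$, of size $\gamma_2(i)$). A summand of~(\ref{eqn:localization}) then reads
\[
f_1(\A)\,f_2(\B)\,\frac{\prod_{a\in Q_1}\bigl(\omega_{h(a),\bar S_{h(a)}}-\omega_{t(a),S_{t(a)}}\bigr)}{\prod_{i\in Q_0}\bigl(\omega_{i,\bar S_i}-\omega_{i,S_i}\bigr)}.
\]
I would then pass from additive Chern roots to the multiplicative variables implicit in the definition of $c_{i,k}$ via the generating function $\prod(1-\omega_{j,u}\xi)/\prod(1-\omega_{i,u}\xi)$. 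This converts the linear differences appearing above into the factors $1-\B_{t(a)}/\A_{h(a)}$ and $1-\B_{t(a)}/\B_{h(a)}$, at the cost of the homogeneity-correcting monomial $\prod_i \B_i^{k_i}$ with $k_i=\sum_{j\in T(i)}|\A_j|-|\A_i|$ that appears in the theorem.

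\textbf{Step 2: subset lemma.} The technical core is the following residue identity. For a formal Laurent expression $h(\A,\B)$ with finitely many monomials of nonzero $\DDelta$-value,
\[
\DDelta_{\B_1\A_1,\ldots,\B_n\A_n}(h) \;=\; \sum_{S_1,\ldots,S_n}\; \frac{\bigl[\,\mathrm{Sym}\,h\bigr]\bigl(\A=\omega_{S},\,\B=\omega_{\bar S}\bigr)}{\prod_{i\in Q_0}\bigl(\omega_{i,\bar S_i}-\omega_{i,S_i}\bigr)},
\]
up to the sign/normalization absorbed in Step~1. I would prove it by expanding each determinant $\Delta^{(i)}_\lambda$ through the permutation formula, identifying the $c_{i,k}$'s as Chern classes of the virtual bundle $\bigoplus_{j\in T(i)}\E_j\ominus\E_i$, and then invoking the Cauchy--Laplace expansion of the resulting antisymmetric rational function as a sum over complementary subsets of the concatenated variable set $\B_i\A_i$.

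\textbf{Step 3: assembly.} Applying Step~2 to
\[
h \;=\; g(\A)\cdot \frac{\prod_i \B_i^{k_i}\,f_2(\B)}{\prod_{a\in Q_1}\bigl(1-\B_{t(a)}/\A_{h(a)}\bigr)\bigl(1-\B_{t(a)}/\B_{h(a)}\bigr)},
\]
the $g$-part, evaluated vertex-by-vertex under the hypothesis $f_1 = \Delta_{\A_1,\ldots,\A_N}(g)$, reproduces $f_1(\omega_{S})$; $f_2(\B)$ becomes $f_2(\omega_{\bar S})$; and the denominator factors, undone by the inverse of Step~1, recombine with the $1/(\omega_{i,\bar S_i}-\omega_{i,S_i})$ from the lemma into exactly the localization kernel. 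Summing over all $(S_i)$ reproduces~(\ref{eqn:localization}) term by term, proving the theorem via Lemma~\ref{lem:star_int}.

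\textbf{Main obstacle.} The bookkeeping in Step~2 is where I expect the real difficulty. One must track signs, the shift $+v-u$ inside the Jacobi--Trudi determinant, the partial-fraction behaviour of the $c_{i,k}$'s as Schur-type symmetric functions of the virtual bundle $\bigoplus_{j\in T(i)}\E_j\ominus \E_i$, and verify that the additive-to-multiplicative conversion in Step~1 produces precisely the prefactor $\prod_i \B_i^{k_i}$ with the announced $k_i$ and no other spurious terms. Once the subset lemma is properly formulated and proved, the rest is a mechanical identification.
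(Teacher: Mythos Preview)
The paper does not actually prove this theorem: immediately before the statement it announces ``we will not give a formal proof how to turn a localization formula to a residue formula,'' pointing instead to the general method in \cite{bsz,kaza:gysin,RRqr}. Your strategy---starting from the localization formula~(\ref{eqn:localization}) and reorganizing it as the output of the $\DDelta$ operation---is exactly the route the paper alludes to, so there is nothing to compare against beyond that reference.

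Your sketch is sound in outline, and your assessment that Step~2 (the ``subset lemma'' matching $\DDelta_{\B_i\A_i}$ with a sum over complementary subsets carrying the denominator $\prod_i(\omega_{i,\bar S_i}-\omega_{i,S_i})$) is the real content is correct: this is precisely the iterated-residue/push-forward identity developed in \cite{kaza:gysin} and \cite[Sect.~11]{ts_loc}. If you want to complete the argument, that is the lemma to extract from those references and adapt to the present $c_{i,k}$'s, which are the equivariant Chern classes of the virtual bundle $\bigoplus_{j\in T(i)}\E_j\ominus\E_i$ as you note. The conversion in Step~1 and the assembly in Step~3 are then indeed mechanical, and the homogeneity correction $\prod_i\B_i^{k_i}$ with $k_i=\sum_{j\in T(i)}|\A_j|-|\A_i|$ falls out of rewriting each numerator factor $\omega_{h(a),\bar S}-\omega_{t(a),S}$ as $\omega_{h(a),\bar S}\cdot(1-\omega_{t(a),S}/\omega_{h(a),\bar S})$ and counting the resulting monomial prefactors vertex by vertex.
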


Here we used obvious multiindex notations, such as
$$\B^k=\prod_{b\in \B} b^k, \qquad\qquad\qquad \left(1-\frac{\B}{\A}\right)=\prod_{b\in \B}\prod_{a\in \A} \left(1-\frac{b}{a}\right).$$

\begin{example} Let $Q=1 \leftarrow 2$, $f_1=1\in \HH_{01}, f_2=1\in \HH_{10}$. Let $\A_1=\{\}, \A_2=\{a_{21}\}$, $\B_1=\{b_{11}\}, \B_2=\{\}$. We have $f_1=\DDelta_{\A_1,\A_2}(1)$, hence
$$f_1*f_2=\DDelta_{   \{b_{11}\}, \{a_{21}\} }
\left(
1 \cdot \frac{ b_{11}^1 \cdot 1}{1}
\right)=
\DDelta_{1}^{(1)}=
\frac{1-\omega_{2,1}\xi}{1-\omega_{1,1}\xi}|_{1}=\omega_{1,1}-\omega_{2,1}.$$
\end{example}

\section{Fundamental classes of orbit closures in the COHA}\label{sec:tps}

Recall that $Q$ is a Dynkin quiver and $\alpha_i$ and $\beta_u$ are the simple, resp. positive roots of the same named root system, listed in the order specified in Section \ref{sec:order}. Let $M_m=\sum_u m_u A_{\beta_u}$ be a $\C Q$-module, and let $\eta_m$ be the corresponding orbit in a quiver representation $G_\gamma$ acting on $V_\gamma$. In particular $\sum m_u d^i_u=\gamma(i)$ for all $i$.

A remarkable object associated with the orbit $\eta_m$ is the  equivariant fundamental  class $[\overline{\eta}_m]\in \HH_{\gamma}$ of its closure. The rich algebraic combinatorics of this class---called a quiver polynomial--- is studied e.g. in \cite{buch-fulton, tegez, B:Gr,   bkty,bfr, BSY, ks, KMS, br, buch:dynkin, RRqr}. Now we show that this class is a natural structure constant of the COHA.

\begin{theorem} \label{thm:tp}
The fundamental $G_\gamma$-equivariant class of the orbit closure $\overline{\eta}_m$ in $\HH_{\gamma}=H^*_{G_\gamma}(V_\gamma)$ is
$$ [\overline{\eta}_m]=  \left( 1\in \HH_{m_1\beta_1} \right) * \left( 1\in \HH_{m_{2}\beta_{2}} \right) * \ldots * \left( 1\in \HH_{m_N\beta_N} \right)
\in \HH_{\gamma}.$$
\end{theorem}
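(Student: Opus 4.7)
The plan is to evaluate the right-hand side geometrically using Section \ref{sec:geo}, and then identify the result with $[\overline{\eta}_m]$ by exhibiting the natural projection from the consistency variety as a birational resolution of $\overline{\eta}_m$.

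First I would apply Lemma \ref{lem:star_int_impr} with $r=N$, $\gamma_u=m_u\beta_u$, and $f_u=1$. The formula specializes to
\[
(1\in \HH_{m_1\beta_1})\,*\cdots*\,(1\in \HH_{m_N\beta_N}) = \pi_*(e(\G)),
\]
where $\pi\colon \Fl_{m_1\beta_1,\ldots,m_N\beta_N}\times V_\gamma \to V_\gamma$ is the second projection. Lemma \ref{lem:Sigma-lemma} replaces $e(\G)$ with the equivariant fundamental class $[\Sigma]$ of the consistency subvariety, so the theorem reduces to the purely geometric identity $\pi_*[\Sigma]=[\overline{\eta}_m]$ in $\HH_\gamma$.

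To establish this identity I would show that $\pi|_\Sigma\colon \Sigma\to V_\gamma$ is a proper birational morphism onto $\overline{\eta}_m$. Properness is automatic because $\Fl$ is projective; irreducibility of $\Sigma$ is clear because it is the total space of the flag-respecting vector bundle over the irreducible partial flag variety $\Fl$. The canonical decomposition $M_m=\bigoplus_u m_u A_{\beta_u}$ with its canonical filtration $V_u=\bigoplus_{v\leq u}m_v A_{\beta_v}$ gives a point of $\Sigma$ over any $\phi\in\eta_m$, so $\eta_m\subset\pi(\Sigma)$; since $\pi(\Sigma)$ is closed, irreducible, and $G_\gamma$-invariant, it contains $\overline{\eta}_m$. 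Equality $\pi(\Sigma)=\overline{\eta}_m$ then follows from a dimension count: combining $\dim\Fl=\sum_i\sum_{u<v} m_um_v d_u^i d_v^i$, $\rk\G=\sum_a\sum_{u<v} m_um_v d_u^{t(a)} d_v^{h(a)}$, and the Voigt-lemma formula $\codim_\C\eta_m=-\sum_{u<v}m_um_v\chi(A_{\beta_u},A_{\beta_v})$ (whose use of $\Ext(A_{\beta_v},A_{\beta_u})=0$ for $u<v$ is precisely the second half of the ordering convention of Section \ref{sec:order}), a direct calculation yields $\dim\Sigma=\dim\overline{\eta}_m$.

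What I expect to be the main technical obstacle is showing that $\pi|_\Sigma$ has generic degree one, i.e., that a generic $\phi\in\eta_m$ admits a unique filtration with subquotient dimension vectors $m_u\beta_u$. I would argue by induction on $u$ that $V_u$ must coincide with $\bigoplus_{v\leq u}m_v A_{\beta_v}$ inside the canonical decomposition of $M_m$. The inductive step amounts to the representation-theoretic fact that the only submodule of $\bigoplus_{v\geq u}m_v A_{\beta_v}$ of dimension vector $m_u\beta_u$ is the first isotypic summand $m_uA_{\beta_u}$. Here Reineke's ordering is indispensable: the vanishing $\Hom(A_{\beta_u},A_{\beta_w})=0$ for $u<w$ forces every nonzero map out of $A_{\beta_u}$ into $\bigoplus_{v\geq u}m_v A_{\beta_v}$ to land in the $A_{\beta_u}$-summand; combined with Krull--Schmidt and the positivity observation that $\sum_k\beta_{w_k}=m_u\beta_u$ with $w_k\geq u$ forces every $w_k=u$ (no cancellation among positive roots), this pins down the unique submodule. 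With uniqueness of the filtration in hand, $\pi|_\Sigma$ is a degree-one proper birational map onto $\overline{\eta}_m$, and the projection formula yields $\pi_*[\Sigma]=[\overline{\eta}_m]$, completing the proof.
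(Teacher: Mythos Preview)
Your approach is essentially identical to the paper's: use Lemmas \ref{lem:star_int_impr} and \ref{lem:Sigma-lemma} to rewrite the product as $\pi_*[\Sigma]$, and then identify this with $[\overline{\eta}_m]$ via the fact that $\pi|_\Sigma:\Sigma\to\overline{\eta}_m$ is a resolution. The only difference is that the paper simply cites Reineke \cite{reineke} for this resolution property, whereas you attempt to reprove it from scratch.

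Your sketch of Reineke's argument is largely sound (properness, irreducibility, the dimension count, and the inductive scheme for uniqueness of the filtration are all correct), but the ``positivity observation'' you invoke---that $\sum_k \beta_{w_k}=m_u\beta_u$ with all $w_k\geq u$ in the Reineke order forces every $w_k=u$---is not an obvious combinatorial fact about positive roots and is not justified by ``no cancellation'' alone. The Reineke order is not in general a refinement of any natural partial order on the root lattice, so one cannot simply compare coefficients. The clean way to finish is module-theoretic rather than root-combinatorial: for any summand $A_{\beta_w}$ of your submodule $N\subset\bigoplus_{v\geq u} m_vA_{\beta_v}$, the composite $A_{\beta_w}\hookrightarrow N\hookrightarrow\bigoplus_{v\geq u}m_vA_{\beta_v}\twoheadrightarrow\bigoplus_{v>u}m_vA_{\beta_v}$ vanishes whenever $w=u$ (by $\Hom(A_{\beta_u},A_{\beta_v})=0$ for $v>u$), so the $\beta_u$-isotypic part of $N$ already lies inside $m_uA_{\beta_u}$; then one compares dimension vectors using the analogous $\Ext$-vanishing on the quotient side to rule out $w>u$. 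This is exactly what Reineke does, and is why the paper is content to cite him.
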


\begin{proof}
Consider the construction of Section \ref{sec:improved} for the dimension vectors $m_1\beta_1, \ldots, m_N\beta_N$ (in this order). Reineke proved in \cite{reineke} that the projection $\pi|_\Sigma:\Sigma \to \overline{\eta}_m$ is a resolution of the orbit closure $\overline{\eta}_m$. Hence we have
$[\overline{\eta}_m]=\pi_*([\Sigma])$. On the other hand $\pi_*([\Sigma])=( 1\in \HH_{m_1\beta_1} ) *  \ldots * ( 1\in \HH_{m_N\beta_N} )$ because of Lemmas \ref{lem:star_int_impr} and \ref{lem:Sigma-lemma}.
\end{proof}

\section{Structure of Dynkin COHAs} \label{sec:structure}

Let $Q$ be a Dynkin quiver, but {\em not} an orientation of the graph $E_8$. Let $\alpha_i$ and $\beta_u$ be the simple, resp. positive roots of the same named root system, listed in the order specified in Section \ref{sec:order}. Recall that $\beta_u=\sum_i d^i_u \alpha_i$.

For each $u$ choose an $i$ such that $d^i_u=1$, and call this $i=i(u)$. This choice will be fixed throughout the section, and will not be indicated in notation. (This argument does not work for $E_8$: the longest positive root of $E_8$ does not admit such an $i$.)

\begin{definition} Let
$$\overline{\PP}_{\beta_u}=\{ f(\omega_{i(u),1} ) \} \subset \HH_{\beta_u}.$$
Let  $\PP_{\beta_u}$ be the subring of $\HH$ generated by $\overline{\PP}_{\beta_u}$.
\end{definition}

In other words, $\PP_{\beta_u}$ consists of equivariant classes in $\HH_{m\beta_u}$ for all $m=0,1,2,\ldots$ that only depend on the Chern roots (or Chern classes) at the $i(u)$'th vertex. For a simple root $\alpha_i$ the subring $\PP_{\alpha_i}$ is generated by $\overline{\PP}_{\alpha_i}=\HH_{\alpha_i}$. We clearly have
$$\PP_{\beta_u} \cong \HH^{A_1}.$$

\begin{theorem} \label{thm:structure} Let $Q$ be a Dynkin quiver, but not an orientation on $E_8$, and use the notations above. In particular $\PP_\beta\cong \HH^{A_1}$ are subrings of $\HH^Q$.
The $*$ multiplication (from left to right) induces {\em isomorphisms}
\begin{equation} \label{eqn:str1}  \PP_{\alpha_1} \otimes \PP_{\alpha_{2}} \otimes \ldots \otimes \PP_{\alpha_n} \xrightarrow{*} \HH^Q,\end{equation}
\begin{equation} \label{eqn:str2}  \PP_{\beta_1} \otimes \PP_{\beta_{2}} \otimes \ldots \otimes \PP_{\beta_N} \xrightarrow{*} \HH^Q. \end{equation}
\end{theorem}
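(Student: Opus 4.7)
I would treat the two isomorphisms separately, beginning with the more elementary (\ref{eqn:str1}) and then attacking (\ref{eqn:str2}) with the Kazarian spectral sequence.

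For (\ref{eqn:str1}), I would apply the multi-factor localization formula (\ref{eqn:localization}) directly to $f_1 * f_2 * \cdots * f_n$ with $f_k \in \HH_{\gamma(k)\alpha_k} \subset \PP_{\alpha_k}$. In the iterated associative product, at each step one multiplies a class supported on the vertices $\{1,\ldots,k-1\}$ by a class supported on vertex $k$. The only possible subset choices are $S_j=[\gamma(j)]$ and $\bar S_j=\emptyset$ for $j<k$, $S_k=\emptyset$ for vertex $k$, and empty subsets for $j>k$. The head-before-tail convention of Section~\ref{sec:order} guarantees that for every edge $a$ at least one of $\bar S_{h(a)}$ or $S_{t(a)}$ is empty, so the Euler-class numerator collapses to $1$; the Vandermonde denominator collapses to $1$ by the same considerations. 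Hence $f_1 * \cdots * f_n$ is the ordinary cup product $f_1\cdot f_2 \cdots f_n$ in $\HH_\gamma = \bigotimes_i H^*(BGL_{\gamma(i)})$, and this is manifestly a vector-space isomorphism.

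For (\ref{eqn:str2}), fix $\gamma$ and consider the Kazarian spectral sequence of Section~\ref{sec:kss} for the $G_\gamma$-action on $V_\gamma$. By Proposition~\ref{prop:stab} each stabilizer $G_{\eta_m}$ is, up to homotopy, a product of unitary groups; the $E_1$-page has only even entries so the spectral sequence degenerates. This endows $\HH_\gamma$ with a canonical filtration $F^\bullet$ with
\[
F^{2c}/F^{2c+2} \;\cong\; \bigoplus_{\codim_\C \eta_m = c} H^*(BG_{\eta_m}) \;\cong\; \bigoplus_{\codim_\C \eta_m = c} \bigotimes_u H^*(BGL_{m_u}).
\]
On the left-hand side of (\ref{eqn:str2}), setting $\PP_\beta^{(m)} := \PP_\beta \cap \HH_{m\beta}$, I would induce the matching filtration by grouping the summands $\PP_{\beta_1}^{(m_1)} \otimes \cdots \otimes \PP_{\beta_N}^{(m_N)}$ according to $\codim_\C \eta_m$. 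The heart of the argument is the associated-graded computation. By Lemmas~\ref{lem:star_int_impr} and \ref{lem:Sigma-lemma} we have $f_1 * \cdots * f_N = \pi_*\bigl(\prod_u f_u(\F_{.,u}) \cdot [\Sigma]\bigr)$, and Reineke's result (used in the proof of Theorem~\ref{thm:tp}) says that $\pi|_\Sigma \colon \Sigma \to \overline{\eta}_m$ is a resolution that is an isomorphism over the open orbit $\eta_m$. Properness of $\pi|_\Sigma$ then shows the product is supported on $\overline{\eta}_m$, hence lies in $F^{2\codim_\C \eta_m}$; and the class induced on the $\eta_m$-piece of the associated graded is obtained simply by evaluating $\prod_u f_u(\F_{.,u})$ on the unique flag over a generic point of $\eta_m$. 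Under the decomposition $G_{\eta_m} \cong \prod_u GL_{m_u}$, the bundle $\F_{i(u),u}$ over this flag is precisely the tautological rank-$m_u$ bundle of the $u$-th factor---this is where the hypothesis $d^{i(u)}_u=1$ is used, and this is exactly what fails for the longest root of $E_8$. Consequently $f_u(\F_{i(u),u}) \in \PP_{\beta_u}^{(m_u)}$ is identified with the corresponding element of $H^*(BGL_{m_u})$, and the associated-graded map is a direct sum of tensor products of these identifications---an isomorphism on each $m$-summand.

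An isomorphism on associated graded for a filtered map between graded vector spaces whose Poincar\'e series match in each cohomological degree is an isomorphism; matching of Poincar\'e series is Reineke's identity (Theorem~\ref{thm:dilog}), and is in fact already encoded in the degeneration identity (\ref{eqn:fromKss}) together with the grading shift $-2\chi(m_u\beta_u,m_v\beta_v)$ built into the COHA product, which reorganises the left-hand side of (\ref{eqn:str2}) degree-by-degree into exactly the right-hand side's Poincar\'e series. The main obstacle I anticipate is to verify cleanly that Reineke's resolution $\pi|_\Sigma$ is an isomorphism over $\eta_m$ (equivalently, that the generic fiber over $\eta_m$ is a single flag, which follows from the uniqueness of the decomposition into indecomposables and the Hom/Ext-vanishing of Section~\ref{sec:order}), and to match the indexing on the two filtrations so that the $m$-summand on the left really pairs with the $\eta_m$-summand on the right rather than with some permutation of orbits of equal codimension.
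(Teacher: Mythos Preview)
Your treatment of (\ref{eqn:str1}) coincides with the paper's. For (\ref{eqn:str2}) the core computation is also the same as the paper's Proposition~\ref{prop:Euler}---restricting $f_1*\cdots*f_N$ to the orbit $\eta_m$ via Reineke's resolution, where only the unique flag over a generic point survives---but the two proofs package it differently. The paper uses this restriction only to establish \emph{injectivity} (the restriction equals $f_1(\mu)\cdots f_N(\mu)\cdot E_m$, and $E_m$ is a nonzero-divisor in the polynomial ring $H^*(BG_{\eta_m})$), and then invokes the dilogarithm identity of Theorem~\ref{thm:dilog} separately to match Poincar\'e series and deduce surjectivity. You instead upgrade the same computation to an isomorphism on the associated graded of the Kazarian filtration (the Thom isomorphism absorbs the factor $E_m$); since the filtration on each $\HH_\gamma$ is finite, this already gives the isomorphism outright, so your subsequent appeal to Poincar\'e series is redundant. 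Your route is therefore slightly more self-contained---it bypasses Theorem~\ref{thm:dilog} entirely---at the price of checking that the image of the $m$-summand vanishes on every other orbit of the same codimension, which, as you correctly anticipate, follows from support on $\overline{\eta}_m$. Both arguments use the hypothesis $d_u^{i(u)}=1$ at exactly the same spot.
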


\begin{remark} The (\ref{eqn:str1}) part of the Theorem holds for an $E_8$ quiver as well. Let $\rho$ be the longest root of $E_8$. Let us fix an $i\in \{1,\ldots,8\}$ and define the subspace $\PP^{E_8}_{i,\rho}=\{ f\in \HH_{m\rho} : m=0,1,2,\ldots, f=f(\alpha_{i,1})\}$. The analogue of (\ref{eqn:str2}) holds for an $E_8$ quiver as well, if we use $\PP^{E_8}_{i,\rho}$ for $\PP_{\rho}$. However, the subspace $\PP^{E_8}_{i,\rho}$ is not a subring of $\HH^{E_8}$.
\end{remark}

Theorem \ref{thm:structure} for $Q=A_2$ was announced in \cite{ks_coha}. The rest of Section \ref{sec:structure} is devoted to proving Theorem \ref{thm:structure}.

\medskip

First, let $f_i(\omega_{i,1},\ldots, \omega_{i,\gamma(i)}) \in \PP_{\alpha_i}$. From the localization formula (\ref{eqn:localization}) (or simply from the topological definition of multiplication), it follows that
$$f_1 * \ldots * f_n = f_1 \cdot \ldots \cdot f_n \in \HH_{\gamma}=
\Z[\omega_{i,j}]_{i=1,\ldots,n \atop j=1,\ldots,\gamma(i)}^{S_{\gamma(1)}\times \ldots \times S_{\gamma(n)}}.$$
This implies that (\ref{eqn:str1}) is an isomorphism.

\subsection{Injectivity} \label{sec:injectivity}

Next we want to show that the map in (\ref{eqn:str2}) is injective. Let $m_1,\ldots,m_N$ be nonnegative integers. Let the dimension vector of $\sum_u m_u \beta_u$ be $\gamma$. At the action of $G_\gamma$ acting on $V_\gamma$, let the orbit corresponding to $\sum_u m_u A_{\beta_u}$ be $\eta_m$. Set
$\PP_{\beta,m}=\PP_{\beta} \cap \HH_{m\beta}$.

\begin{lemma} \label{lem:inj1}
The map induced by $*$ multiplication
$$\phi_m: \PP_{\beta_1,m_1} \otimes \ldots \otimes \PP_{\beta_N,m_N} \to \HH_{\gamma}$$
is injective.
\end{lemma}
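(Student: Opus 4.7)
The plan is to construct a restriction homomorphism
\[
r_m \colon \HH_\gamma = H^*_{G_\gamma}(V_\gamma) \longrightarrow H^*_{G_\gamma}(\eta_m) = H^*(BG_{\eta_m}) = \bigotimes_{u} \Z[x_{u,1},\ldots,x_{u,m_u}]^{S_{m_u}}
\]
(restriction along the inclusion of the orbit $\eta_m \hookrightarrow V_\gamma$, where the identification of the stabilizer uses Proposition~\ref{prop:stab}) and show that the composition $r_m\circ\phi_m$ is already injective; this forces $\phi_m$ itself to be injective.

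To compute $r_m\circ\phi_m$, I will express $\phi_m(f_1\otimes\cdots\otimes f_N)$ geometrically via Lemmas~\ref{lem:star_int_impr} and~\ref{lem:Sigma-lemma} as $(\pi|_\Sigma)_*\bigl(\prod_u f_u(\F_{\cdot,u})|_\Sigma\bigr)$, and factor $\pi|_\Sigma$ through Reineke's resolution $\tilde\pi\colon\Sigma\to\overline\eta_m$ (invoked in the proof of Theorem~\ref{thm:tp}) followed by the closed inclusion $\iota\colon\overline\eta_m\hookrightarrow V_\gamma$. Birationality of $\tilde\pi$ forces $\tilde\pi^{-1}(p)$ to be a single point $q_p$ for $p\in\eta_m$, namely the flag coming from the canonical filtration $\bigl\{\bigoplus_{u'\leq u}m_{u'}A_{\beta_{u'}}\bigr\}_{u=1}^{N}$ of $p\simeq\bigoplus_u m_u A_{\beta_u}$ (these are genuine subrepresentations thanks to the ordering of $\beta$'s in Section~\ref{sec:order}). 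Combining equivariant base change $\tilde\pi_*(\alpha)|_p=\alpha|_{q_p}$ with the self-intersection identity $\iota^*\iota_*(\beta)=\beta\cdot e(\mathcal{N}_{\eta_m/V_\gamma})$, and using that $d^{i(u)}_u=1$ (so $\F_{i(u),u}$ at $q_p$ carries exactly the Chern roots $x_{u,1},\ldots,x_{u,m_u}$ of $U(m_u)$ and nothing else), I expect the clean identity
\[
r_m\bigl(\phi_m(f_1\otimes\cdots\otimes f_N)\bigr) = \Bigl(\prod_{u} f_u(x_{u,1},\ldots,x_{u,m_u})\Bigr)\cdot e(\mathcal{N}_{\eta_m/V_\gamma}).
\]

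Injectivity follows at once from two elementary observations. First, Voigt's lemma identifies $\mathcal{N}_{\eta_m/V_\gamma}\simeq\bigoplus_{u<v}\Ext(A_{\beta_u},A_{\beta_v})\otimes\Hom(\C^{m_v},\C^{m_u})$ as a $G_{\eta_m}$-module, so (up to sign) $e(\mathcal{N}_{\eta_m/V_\gamma})=\prod_{u<v}\prod_{i,j}(x_{u,j}-x_{v,i})^{\dim\Ext(A_{\beta_u},A_{\beta_v})}$ is a nonzero element of the polynomial ring $H^*(BG_{\eta_m})$, so multiplication by it is injective. Second, the map $(f_u)\mapsto\prod_u f_u(x_{u,\cdot})$ is the tensor product of the tautological variable-renaming embeddings $\PP_{\beta_u,m_u}\hookrightarrow\Z[x_{u,1},\ldots,x_{u,m_u}]^{S_{m_u}}$, each injective; since all factors are free graded $\Z$-modules, the tensor product is injective as well.

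The main obstacle is executing the pushforward computation cleanly: one must justify the equivariant base-change formula through the (possibly singular) closed subvariety $\overline\eta_m$, verify that the unique preimage $q_p$ under $\tilde\pi$ really corresponds to the canonical flag, and check that $\F_{i(u),u}|_{q_p}$ is the $U(m_u)$-standard representation with no contributions from any other $U(m_v)$. All three steps are consequences of Reineke's theorem and Proposition~\ref{prop:stab}, but the argument depends crucially on the Section~\ref{sec:order} ordering of positive roots, which is precisely what makes $\bigoplus_{u'\leq u}m_{u'}A_{\beta_{u'}}$ a subrepresentation of $\bigoplus_{u}m_u A_{\beta_u}$ for every $u$, so that the required flag actually exists inside the quiver module $p$.
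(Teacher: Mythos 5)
Your strategy is the same as the paper's: restrict along $\eta_m\hookrightarrow V_\gamma$ (the paper calls this map $\iota_m^*$ and identifies $H^*_{G_\gamma}(\eta_m)$ with $H^*(BG_\Phi)$ for a concretely chosen representative $\Phi$), prove that $\iota_m^*\circ\phi_m$ sends $\bigotimes_u f_u$ to $\prod_u f_u(\text{Chern roots of }U(m_u))\cdot E_m$ where $E_m$ is the equivariant Euler class of the normal bundle, and conclude injectivity because $E_m$ is a nonzero element of a polynomial ring and the map $\bigotimes f_u\mapsto\prod f_u$ is injective. So the overall architecture, including the exploitation of $d^{i(u)}_u=1$ to get a clean identification of the variables, is exactly the paper's.

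Where you diverge is in how the key identity (Proposition~\ref{prop:Euler} in the paper) is established. The paper stays entirely at the level of the explicit equivariant localization formula for $\pi_*$: it expands $f_1*\cdots*f_N$ as the finite fixed-point sum of Section~\ref{sec:localization}, uses Reineke's birationality to see that $\iota_m^*$ annihilates all but the one term corresponding to the canonical flag over $\Phi$ (encoded combinatorially by the sets $Y_{i,u,v}$), and then identifies the residual factor $W$ as $E_m$ by the elegant trick of specializing all $f_u=1$ and invoking Theorem~\ref{thm:tp}. You instead factor $\pi|_\Sigma$ through Reineke's resolution $\tilde\pi\colon\Sigma\to\overline\eta_m$, apply equivariant base change over the open orbit, and use the self-intersection formula to produce the Euler class, which you then compute directly from Voigt's lemma. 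Both routes are valid, and your version is arguably more conceptual and gives the Euler class explicitly rather than indirectly; the price, which you correctly flag, is that pushing forward along a proper map whose image $\overline\eta_m$ is singular and then restricting to the smooth open orbit requires a careful statement of base change (or an excess-intersection argument using the normal bundle $\G|_\Sigma$ of $\Sigma$ in $\Fl\times V_\gamma$), whereas the paper sidesteps this by never leaving the fixed-point sum. One further small remark: the paper gets to reuse Theorem~\ref{thm:tp} to identify $W=E_m$ essentially for free, since that theorem was proved from exactly this geometric picture; your direct computation of $e(\mathcal N_{\eta_m/V_\gamma})$ via $\Ext(M,M)$ reproduces the same class (up to sign convention, as you note) but does not need Theorem~\ref{thm:tp} as an input.
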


\begin{proof}
Let $Y_{i,u,v}$  for $i=1,\ldots,n$, $u=1,\ldots,N$, $v=1,\ldots,m_u$ be sets of non-negative integers with $|Y_{i,u,v}|=d^i_u$, and such that the {\em disjoint} union
$$ Y_{i,1,1} \cup \ldots \cup Y_{i,1,m_1} \ \ \cup\ \
Y_{i,2,1} \cup \ldots \cup Y_{i,2,m_2} \ \ \cup \ \
\ldots
\ \ \cup\ \
Y_{i,N,1} \cup \ldots \cup Y_{i,N,m_N}
$$
is equal to $\{1,\ldots,\gamma(i)\}$ {\em in this order}. That is,
$$Y_{i,1,1}=\{1,\ldots,d^i_1\},\ \  Y_{i,1,2}=\{d^i_1+1,\ldots,2d^i_1\},\ \ \text{etc}.$$

Next we will associate an element $\Phi\in V_\gamma$ to the system of sets $Y_{i,u,v}$. Let $e_{i,1},\ldots,e_{i,\gamma(i)}$ be the standard basis of $\C_i^{\gamma(i)}$. Let $A_{u,v}$ be an indecomposable $\C Q$ module isomorphic with $A_{\beta_u}$ spanned by $e_{i,j}$ for $j\in Y_{i,u,v}$. Set $\Phi=\oplus_{u=1}^N \oplus_{v=1}^{m_u} A_{u,v}$.

\medskip
\noindent{\sl Example:} Let $Q=1 \leftarrow 2$, $\beta_1=\alpha_2, \beta_2=\alpha_1+\alpha_2, \beta_3=\alpha_1$ and $m_1=m_2=m_3=2$. The reader might find the following diagram---illustrating the role of the indexing sets---helpful.
\begin{equation*}
\begin{array}{ccccccc}
Y_{1,1,1}=\{\} &  &         &             & e_{2,1} &       & Y_{2,1,1}=\{1\} \\
Y_{1,1,2}=\{\} &  &         &             & e_{2,2} &       & Y_{2,1,2}=\{2\} \\
Y_{1,2,1}=\{1\} & & e_{1,1} &  \mapsfrom  & e_{2,3} & & Y_{2,2,1}=\{3\} \\
Y_{1,2,2}=\{2\} & & e_{1,2} &  \mapsfrom  & e_{2,4} & & Y_{2,2,2}=\{4\} \\
Y_{1,3,1}=\{3\} & & e_{1,3} &             &  & & Y_{2,3,1}=\{\} \\
Y_{1,3,2}=\{4\} & & e_{1,4} &             &  & & Y_{2,3,2}=\{\} \\
\end{array}
\end{equation*}
Hence $\Phi:\C^4_2 \to \C^4_1$ is a rank 2 map with kernel span$(e_{2,1},e_{2,1})$ and $e_{2,3}\mapsto e_{1,1}$, $e_{2,4}\mapsto e_{1,2}$.
\medskip

The restriction map $\iota_m^*:H_{G_\gamma}^*(V_\gamma)\to H_{G_\gamma}^*(\eta_m)$ induced by $\iota_m: \eta_m \subset V_\gamma$ can be identified with
$H^*(BG_\gamma) \to H^*(BG_{\Phi})$ where $G_{\Phi}$ is the stabilizer subgroup of the $\Phi$ in $G_\gamma$. This map is
\begin{equation} \label{eqn:restriction}
\begin{array}{ccccc}
 \iota_m^*: \C[\omega_{i,j}]^{S_\gamma(1)\times \ldots\times S_\gamma(n)}_{  \ontop{i=1,\ldots,n}{j=1,\ldots,\gamma(i)} } & \to &
 \C[\mu_{u,v}]^{S_{m_1}\times \ldots\times S_{m_N}}_{  \ontop{u=1,\ldots,N}{v=1,\ldots,m_u} } & &  \\
 \omega_{i,j} & \mapsto & \mu_{u,v} & \text{if} & j\in Y_{i,u,v}.
\end{array}
\end{equation}
The restriction map $\iota_m^*$ is studied in detail in \cite[Sect.3]{tegez}.

\medskip
\noindent{\sl Example:} In the above example we have
\begin{equation}\label{eqn:iota}
\begin{array}{cccccc}
 &  & \mu_{1,1} & \mapsfrom& \omega_{2,1}\\
 &  & \mu_{1,2} & \mapsfrom & \omega_{2,2}\\
\omega_{1,1} & \mapsto & \mu_{2,1} & \mapsfrom &  \omega_{2,3}& \\
\omega_{1,2} & \mapsto & \mu_{2,2} & \mapsfrom &  \omega_{2,4}& \\
\omega_{1,3}             & \mapsto & \mu_{3,1} & &  & \\
 \omega_{1,4}            & \mapsto & \mu_{3,2}. &  &  & \\
\end{array}
\end{equation}
\medskip

Let $E_m\in H^*(BG_\Phi)$ be the equivariant Euler class of the normal bundle of $\eta_m$ at $\Phi$. The key point of our argument is the following  Proposition.

\begin{proposition} \label{prop:Euler}
The map $\iota_m^* \circ \phi_m:\PP_{\beta_1,m_1} \otimes \ldots \otimes \PP_{\beta_N,m_N} \to H^*(BG_\Phi)$ maps the element
$$
f_1(\omega_{i(1),1},\ldots,\omega_{i(1),m_1})
\otimes \ldots \otimes
f_N(\omega_{i(N),1},\ldots,\omega_{i(N),m_N})
$$
to
$$
f_1(\mu_{1,1},\ldots,\mu_{1,m_1})
\cdot \ldots \cdot
f_N(\mu_{N,1},\ldots,\mu_{N,m_N}) \cdot E_m.
$$
\end{proposition}

\begin{proof}
Applying the localization formula of Section \ref{sec:localization} for $f_1*\ldots*f_N$ we obtain a sum of
$$\prod_{i=1}^n  \frac{ \gamma(i)!}{{\gamma_1(i)!\gamma_2(i)! \ldots \gamma_N(i)!}}$$
terms. Recall that we interpreted this localization formula as the localization formula for the $\pi_*$ map in Lemma \ref{lem:star_int_impr}. Now we use Reineke's result \cite{reineke} again, claiming that $\Sigma$ is a resolution of the orbit closure $\overline{\eta}_m$. In particular there is only one torus fixed point over the point $\Phi$ in $\Sigma$. Therefore, when applying the $\iota_m^*$ map to this localization sum {\em all but one terms} will map to 0.

This one term corresponds to the choice of subsets $\cup_{v=1}^{m_u} Y_{i,u,v} \subset \{1,\ldots,\gamma(i)\}$. Hence the $\iota_m^*$-image of this term is
$$
f_1(\mu_{1,1},\ldots,\mu_{1,m_1})
\cdot \ldots \cdot
f_N(\mu_{N,1},\ldots,\mu_{N,m_N})
\cdot W,
$$
where $W$ is a ratio of products of linear factors of the type $\mu_{..}-\mu_{..}$, independent of the $f_u$'s. Since $W$ is independent of the $f_u$'s, we can find its value by choosing $f_u=1$ for all $u$. We obtain
$$\iota_m( 1 * \ldots * 1) = \iota_m^*( [\overline{\eta}_m] ) =E_m\qquad \qquad \text{and hence} \qquad\qquad W=E_m.$$
Here the first equality holds because of Theorem \ref{thm:tp}. The second equality follows from the topological observation that the class of a variety restricted to the smooth points of the variety itself is the Euler class of the normal bundle.

\medskip
\noindent{\sl Example:} Continuing the example above (with the choice of $i(1)=2, i(2)=1, i(3)=1$), we have
$$
f_1(\omega_{2,1},\omega_{2,2}) * f_2(\omega_{1,1},\omega_{1,2})*f_3(\omega_{1,1},\omega_{1,2})
=
$$
$$
f_1(\omega_{2,1},\omega_{2,2}) f_2(\omega_{1,1},\omega_{1,2}) f_3(\omega_{1,3},\omega_{1,4}) \cdot
\frac{
\prod_{i=1}^4 \prod_{j=1}^2 (\omega_{1,i}-\omega_{2,j})
\prod_{i=3}^4 \prod_{j=3}^4 (\omega_{1,i}-\omega_{2,j})}
{\prod_{i=3}^4 \prod_{j=1}^2 (\omega_{1,i}-\omega_{1,j})
\prod_{i=3}^4 \prod_{j=3}^4 (\omega_{2,i}-\omega_{2,j})}
$$
$$+\text{[35 similar terms]}.$$
The homomorphism $\iota_{2,2,2}^*$ (see (\ref{eqn:iota})) maps all the terms from term 2 to term 36 to 0, and maps term 1 given above to
$$
f_1(\mu_{1,1},\mu_{1,2}) f_2(\mu_{2,1},\mu_{2,2}) f_3(\mu_{3,1},\mu_{3,2}) \cdot
\frac{
\prod_{u=2}^3 \prod_{v=1}^2\prod_{v'=1}^2 (\mu_{u,v}-\mu_{1,v'})
\prod_{v=1}^2\mu_{v'=1}^2 (\mu_{3,v}-\mu_{2,v'})}
{
\prod_{v=3}^4\prod_{v'=1}^2 (\mu_{1,v}-\mu_{1,v'})
\prod_{v=3}^4\prod_{v'=1}^2 (\mu_{2,v}-\mu_{2,v'})}=
$$
$$
f_1(\mu_{1,1},\mu_{1,2}) f_2(\mu_{2,1},\mu_{2,2}) f_3(\mu_{3,1},\mu_{3,2}) \cdot
\underbrace{\prod_{v=1}^2 \prod_{v'=1}^2 (\mu_{3,v}-\mu_{1,v'})}_{E_{2,2,2}}.
$$
\medskip

This concludes the proof of the proposition. \end{proof}

Proposition \ref{prop:Euler} clearly implies the injectivity of the map $\phi_m$, and hence Lemma \ref{lem:inj1}.
\end{proof}

\subsection{Finishing the proof of Theorem \ref{thm:structure}}
We proved that the map (\ref{eqn:str2}) is injective. We claim that Theorem \ref{thm:dilog} can be interpreted as the fact that the two sides of (\ref{eqn:str2}) have the same dimensions (Poincar\'e series)---this is already implicit in \cite{ks_coha}. This implies Theorem \ref{thm:structure}. For completeness we now show the details of the claim that the two sides of (\ref{eqn:str2}) indeed have the same Poincar\'e series.

\medskip


Let $\HH_{m,k}^{A_1}$ be the degree $2k$ part of $\HH^{A_1}_m$, that is $H^{2k}(BGL_m)$. Consider the following twisted-shifted Poincar\'e series
$$h(z)=\sum_{m,k} \dim ( \HH_{m,k}^{A_1} )\cdot (-z)^m q^{m^2/2+k}.$$
Observe that this is exactly $\EE(z)$ from Section \ref{sec:dilog}.

Let $(\PP_{\beta,m})_k$ be the degree $2k$ part of $\PP_{\beta,m}$, that is, also $H^{2k}(BGL_m)$. Consider the Poincar\'e series
$$h_{\beta_u}(z)=\sum_{m,k} \dim ( \PP_{\beta_u,m} )_k \cdot (-z)^m q^{m^2/2+k}=h(z)=\EE(z).$$

Straightforward calculation shows that
\begin{equation} \label{eqn:E}
\EE(y_{\beta_1})\cdots \EE(y_{\beta_N}) =h_{\beta_1}(y_{\beta_1})\cdots h_{\beta_N}(y_{\beta_N})
\end{equation}
\begin{align}
& = \sum_{m_u} y_{\beta_1}^{m_1}\ldots y_{\beta_N}^{m_N} (-1)^{\sum m_u} q^{\sum m_u^2/2} \sum_K q^K
\underbrace{
\sum_{k_1+\ldots+k_N=K}\dim(\PP_{\beta_1,m_1})_{k_1} \cdots \dim(\PP_{\beta_N,m_N})_{k_N}.
}_
{\dim( \PP_{\beta_1,m_1}*\ldots* \PP_{\beta_N,m_N})_{K+\codim \eta_m}}  \nonumber \\ \label{eqn:w1}
& = \sum_{\gamma(1),\ldots,\gamma(n)}
y_{\alpha_1}^{\gamma(1)}\ldots y_{\alpha_n}^{\gamma(n)} (-1)^{\sum \gamma(i)} q^{\sum \gamma(i)^2/2} \sum_K q^K
\dim( \PP_{\beta_1,m_1}*\ldots* \PP_{\beta_N,m_N} )_K.
\end{align}
In the last equality we used Lemma \ref{lem:codim_in_AQ}. However, according to Theorem \ref{thm:dilog} expression (\ref{eqn:E}) can be written as
$$\EE(y_{\alpha_1})\cdots \EE(y_{\alpha_n}) =h_{\alpha_1}(y_{\alpha_1})\cdots h_{\alpha_n}(y_{\alpha_n})=$$
\begin{equation} \label{eqn:w2}
\sum_{\gamma(1),\ldots,\gamma(n)}
y_{\alpha_1}^{\gamma(1)}\ldots y_{\alpha_n}^{\gamma(n)} (-1)^{\sum \gamma(i)} q^{\sum \gamma(i)^2/2} \sum_K q^K
\dim( \HH_{\gamma} )_K.
\end{equation}
The comparison of (\ref{eqn:w1}) with (\ref{eqn:w2}) shows that the Poincar\'e series of the two sides of (\ref{eqn:str2}) are the same. Since we already proved that the map (\ref{eqn:str2}) is injective, we can conclude that it is an isomorphism. This finishes the proof of  Theorem \ref{thm:structure}.

\bibliography{coha}

\begin{thebibliography}{BKTY04}

\bibitem[AB83]{ab83}
M.~Atiyah and R.~Bott.
\newblock The {Y}ang-{M}ills equation over {R}iemann surfaces.
\newblock {\em Phil. Trans. of the Royal Soc. London}, 308(1505):523--615,
  1983.

\bibitem[BF99]{buch-fulton}
A.~S. Buch and W.~Fulton.
\newblock Chern class formulas for quiver varieties.
\newblock {\em Invent. Math.}, 135:665--687, 1999.

\bibitem[BFR05]{bfr}
A.~S. Buch, L.~Feh\'er, and R.~Rim\'anyi.
\newblock Positivity of quiver coefficients through {T}hom polynomials.
\newblock {\em Adv. Math.}, 197:306--320, 2005.

\bibitem[BKTY04]{bkty}
A.~S. Buch, A~Kresch, H.~Tamvakis, and A.~Yong.
\newblock Schubert polynomials and quiver formulas.
\newblock {\em Duke Math. J.}, 122:125--143, 2004.

\bibitem[BR07]{br}
A.~S. Buch and R.~Rim\'anyi.
\newblock A formula for non-equioriented quiver orbits of type a.
\newblock {\em J. Algebraic Geom.}, 16:531--546, 2007.

\bibitem[BS12]{bsz}
G.~B{\'e}rczi and A.~Szenes.
\newblock Thom polynomials of {M}orin singularities.
\newblock {\em Ann. of Math. (2)}, 175(2):567--–629, 2012.

\bibitem[BSY05]{BSY}
A.~S. Buch, F.~Sottile, and A.~Yong.
\newblock Quiver coefficients are {S}chubert structure constants.
\newblock {\em Math. Res. Lett.}, 12:567--574, 2005.

\bibitem[Buc02]{B:Gr}
A.~S. Buch.
\newblock Grothendieck classes of quiver varieties.
\newblock {\em Duke Math. J.}, 115(1):75--103, 2002.

\bibitem[Buc08]{buch:dynkin}
A.~S. Buch.
\newblock Quiver coefficients of {D}ynkin type.
\newblock {\em Michigan Math. J.}, 57:93--120, 2008.

\bibitem[FK94]{fad2}
L.~Faddeev and R.~M. Kashaev.
\newblock Quantum dilogarithm.
\newblock {\em Modern Phys. Lett. A}, 9(5), 1994.

\bibitem[FR02]{tegez}
L.~Feher and R.~Rimanyi.
\newblock Classes of degeneracy loci for quivers---the {T}hom polynomial point
  of view.
\newblock {\em Duke Math. J.}, 114(2):193--213, August 2002.

\bibitem[FR12]{ts_loc}
L.~M. Feh{\'e}r and R.~Rim{\'a}nyi.
\newblock Thom series of contact singularities.
\newblock {\em Annals of Math.}, 176(3):1381--1426, november 2012.

\bibitem[FV93]{fad1}
L.~Faddeev and A.~Yu. Volkov.
\newblock Abelian current algebra and the {V}irasoro algebra on the lattice.
\newblock {\em Phys. Lett. B}, 315(3-4):311–--318, 1993.

\bibitem[Kal]{ryan}
R.~Kaliszewski.
\newblock Structure of quiver polynomials and {S}chur positivity.

\bibitem[Kaz97]{kazass}
M.~\'E. Kazarian.
\newblock Characteristic classes of singularity theory.
\newblock In {\em The Arnold-Gelfand mathematical seminars}, pages 325--340.
  Birkh\"auser Boston, 1997.

\bibitem[Kaz09a]{kaza:gysin}
M.~\'E. Kazarian.
\newblock Gysin homomorphism and degeneracies.
\newblock unpublished, 2009.

\bibitem[Kaz09b]{kaza:noas}
M.~\'E. Kazarian.
\newblock Non-associative {H}ilbert scheme and {T}hom polynomials.
\newblock Unpublished, 2009.

\bibitem[Kel10]{keller}
B.~Keller.
\newblock On cluster theory and quantum dilogarithm identities.
\newblock Notes from three survey lectures at the workshop of the ICRA XIV,
  Tokyo, arXiv:1102.4148, 2010.

\bibitem[KMS06]{KMS}
A.~Knutson, E.~Miller, and M.~Shimozono.
\newblock Four positive formulae for type {A} quiver polynomials.
\newblock {\em Invent. Math.}, 166, 2006.

\bibitem[KS06]{ks}
Allen Knutson and Mark Shimozono.
\newblock Kempf collapsing and quiver loci.
\newblock math/0608327, 2006.

\bibitem[KS10]{ks_coha}
Maxim Kontsevich and Yan Soibelman.
\newblock Cohomological {H}all algebra, exponential {H}odge structures and
  motivic {D}onaldson-{T}homas invariants.
\newblock arXiv:1006.2706, 2010.

\bibitem[Rei01]{reineke_feigin}
M.~Reineke.
\newblock Feigin's map and monomial basis for quantized enveloping algebras.
\newblock {\em Math. Z.}, 237:639--667, 2001.

\bibitem[Rei03]{reineke}
M.~Reineke.
\newblock Quivers, desingularizations and canonical bases.
\newblock In {\em Studies in memory of {I}ssai {S}chur ({C}hevaleret{/R}ehovot,
  2000)}, number 210 in Progr. Math., pages 325--344. Birkh{\"a}user, 2003.

\bibitem[Rei10]{reineke_dilog}
M.~Reineke.
\newblock Poisson automorphisms and quiver moduli.
\newblock {\em J. Inst. Math. Jussieu}, 9(3):653--–667, 2010.

\bibitem[Rim13]{RRqr}
R.~Rim\'anyi.
\newblock Quiver polynomials in iterated residue form.
\newblock arXiv:1302.2580, 2013.

\bibitem[Sch53]{sch}
M.~P. Sch{\"u}tzenberger.
\newblock Une interpr{\'e}tation de certaines solutions de l'{\'e}quation
  fonctionnelle: $f(x + y) = f(x)f(y)$.
\newblock {\em C. R. Acad. Sci. Paris}, 236:352–353, 1953.

\bibitem[Zag07]{zagier}
Don Zagier.
\newblock The dilogarithm function.
\newblock In Springer, editor, {\em Frontiers in number theory, physics, and
  geometry}, volume~II, page pp. 3–65, 2007.

\bibitem[Zie12]{lengyel}
M.~Zielenkiewicz.
\newblock Integration over homogenous spaces for classical {L}ie groups using
  iterated residues at infinity.
\newblock arXiv:1212.6623, 2012.

\end{thebibliography}
\bibliographystyle{alpha}

\end{document}